\documentclass[11pt,a4paper,leqno]{article}
\usepackage{amsmath}
\usepackage{amsfonts}
\usepackage{amssymb}
\usepackage{amsthm}
\usepackage{graphicx}
\usepackage{hyperref}
\usepackage[margin=2.5cm]{geometry}

\newtheorem{theorem}{Theorem}[section]
\newtheorem{corollary}[theorem]{Corollary}
\newtheorem{lemma}[theorem]{Lemma}

\theoremstyle{remark}
\newtheorem{remark}[theorem]{Remark}

\title{Iterates of Blaschke products and Peano curves}

\author{Juan Jes\'us Donaire and 
 Artur Nicolau\thanks{The authors are supported in part by the Generalitat de Catalunya (grant 2017 SGR 395), the Spanish Ministerio de Ciencia e Innovaci\'on (project  MTM2017-85666-P) and the María de Maeztu Award.}  \\

{\small
\begin{tabular}{@{}c}
Departament de Matem\`atiques, 
Universitat Aut\`onoma de Barcelona\\
and\\ Centre de Recerca Matem\`atica,\\ 
08193 Barcelona\\
{\tt donaire@mat.uab.cat}\\
{\tt artur@mat.uab.cat}
\end{tabular}}}

%\thanks{The authors are supported in part by the Generalitat de Catalunya (grant 2017 SGR 395) and the Spanish Ministerio de Ciencia e Innovaci\'on (project  MTM2017-85666-P).}

\date{}

\begin{document}
\maketitle

\begin{abstract}
\noindent Let $f$ be a finite Blaschke product  with $f(0)=0$ which is not a rotation and let $f^{n}$ be its $n$-th iterate. Given a sequence  $\{a_{n}\}$ of complex numbers consider  $F= \sum a_n f^{n}$. If $\{a_n\}$ tends to $0$ but  $\sum |a_n| = \infty$, we prove that for any complex number $w$  there exists a point $\xi$ in the unit circle such that $\sum a_{n}f^{n}(\xi)$ converges and its sum is $w$. If $\sum |a_n| < \infty$ and the convergence is slow enough in a certain precise sense, then the image of the unit circle by $F$ has a non empty interior. The proofs are based on inductive constructions which use the beautiful interplay between the dynamics of $f$ as a selfmapping of the unit circle and those as a selfmapping of the unit disc. 

% \noindent \textbf{Keywords}. Inner function, bounded mean oscillation, Hardy spaces, Dirichlet class. 
\end{abstract}

\section{Introduction}\label{sec1}
 
 A lacunary power series is a power series of the form
 \begin{equation}\label{eq1}
 F(z)=\sum_{n=1}^{\infty} a_{n} z^{k_{n}},
 \end{equation}
 where $\{a_{n}\}$ is a sequence of complex numbers and $\{k_{n}\}$ is a sequence of positive integers satisfying $\inf k_{n+1}/k_{n} >1$. The behavior of lacunary series has been extensively studied and it has been shown that in many senses, they behave as sums of independent random variables. If the coefficients~$\{a_{n}\}$ satisfy $\sum |a_{n}|=\infty$ but $a_{n}\to 0$ as $n\to\infty$, a theorem of Paley, proved by Weiss in~\cite{W}, says that for any $w\in\mathbb{C}$ there exists a point~$\xi$ in the unit circle~$\partial\mathbb{D}$ such that $\sum a_{n}\xi^{k_{n}} $ converges and its sum is~$w$. Salem and Zygmund proved that boundary values of certain lacunary series are Peano curves (\cite{SZ}). Their result was refined by Kahane, Weiss and Weiss who showed that if $\sum |a_{n}| <\infty$ but the convergence is slow enough (in a certain precise sense), then $F\colon \partial\mathbb{D}\to\mathbb{C}$ as defined in~\eqref{eq1} is a Peano curve, that is, $F(\partial\mathbb{D})$ contains a (non-degenerate) disc. See \cite{KWW}. More recent related results have
been proved by Bara\'nski (\cite{Ba}), Belov (\cite{Be}), Murai (\cite{Mu1}, \cite{Mu2}, \cite{Mu3}) and Younsi (\cite{Y}).

Let $f$ be a finite Blaschke product with $f(0)=0$ which is not a rotation and let $f^{n}$ denote its $n$-th iterate. The main purpose of this paper is to present analogous results for series of the form
\begin{equation}\label{eq2}
F(z)=\sum_{n=1}^{\infty} a_{n} f^{n}(z).
\end{equation}
It is worth mentioning that several recent results show that linear combinations of iterates, as defined in~\eqref{eq2}, behave as lacunary series. See \cite{NS} and \cite{N}. Our first result is a version of Paley's Theorem in this context.

\begin{theorem}\label{theo1.1}
Let $f$ be a finite Blaschke product with $f(0)=0$ which is not a rotation. Let $\{a_{n}\}$ be a sequence of complex numbers tending to~$0$ such that $\sum |a_{n}|=\infty$. Then for any $w\in\mathbb{C}$ there exists $\xi\in\partial\mathbb{D}$ such that $\sum a_{n}f^{n}(\xi)$ converges and $ \displaystyle{\sum_{n=1}^{\infty}} a_n f^{n}(\xi)=w$.
\end{theorem}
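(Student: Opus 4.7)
The plan is to construct the point $\xi$ as the intersection of a nested sequence of closed arcs $I_0 \supset I_1 \supset \cdots$ in $\partial \mathbb{D}$, together with a strictly increasing sequence of integers $0 = N_0 < N_1 < \cdots$, arranged so that on each $I_k$ the partial sum $S_{N_k}(\xi) := \sum_{n=1}^{N_k} a_n f^n(\xi)$ lies within $2^{-k}$ of $w$, every intermediate partial sum $S_N$ with $N_k \le N \le N_{k+1}$ stays within $2^{-k+1}$ of $w$, and $|I_k| \to 0$. The unique point $\xi := \bigcap_k I_k$ then satisfies $\sum_{n=1}^\infty a_n f^n(\xi) = w$.

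The inductive step is produced by a \emph{correction lemma}: given a closed arc $I \subset \partial\mathbb{D}$, a target $v \in \mathbb{C}$, and $\eta > 0$, provided an integer $M$ is chosen large enough that $\sup_{n > M} |a_n| < \eta$, there exist $M' > M$ and a sub-arc $J \subset I$ with $|J| < \eta$ such that, for every $\xi \in J$,
\[
\Bigl| \sum_{n=M+1}^{M'} a_n f^n(\xi) - v \Bigr| < \eta \quad \text{and} \quad \Bigl| \sum_{n=M+1}^N a_n f^n(\xi) \Bigr| \le |v| + \eta \quad (M \le N \le M').
\]
I would prove this lemma by a greedy refinement: setting $J_M := I$, at each step $n$ I select a sub-arc $J_n \subset J_{n-1}$ on which $f^n(\xi)$ lies in a short arc around the unit vector $e^{i(\arg(v - s_{n-1}) - \arg a_n)}$, where $s_{n-1}$ is the previous block sum, so that adding the new term $a_n f^n(\xi)$ moves $s_n$ toward $v$. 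As long as $|v - s_{n-1}| > \eta$, the distance $|v - s_n|$ decreases by approximately $|a_n|$; since $\sum |a_n| = \infty$, after finitely many steps the running block sum enters the $\eta$-disc around $v$, and because every subsequent $|a_n| < \eta$, continuing to orient new terms in the same fashion traps $s_n$ inside that disc. Defining $M'$ to be the first index at which $s_{M'}$ has entered and stabilized in this disc yields the stated bounds. Applying the lemma at stage $k$ with $v = w - S_{N_k}(\xi_0)$ for a basepoint $\xi_0 \in I_k$ and with $\eta \ll 2^{-k-1}$, and further shrinking $J$ so that $S_{N_k}$ is essentially constant on it, produces the next arc $I_{k+1}$ and index $N_{k+1}$.

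The principal obstacle is the execution of the greedy refinement: at each step I must be able to prescribe the direction of $f^n(\xi)$ on a sub-arc of $J_{n-1}$ while keeping $J_n$ large enough that $f^{n+1}(J_n)$ still covers $\partial \mathbb{D}$ for the next selection. This is exactly where the hypothesis that $f$ is a finite Blaschke product with $f(0) = 0$ not a rotation enters decisively: since $0$ is an attracting fixed point and $f$ has no critical points on $\partial\mathbb{D}$, the unit circle is the Julia set of $f$ and $f$ acts there as a uniformly expanding degree-$d$ covering, so that some iterate satisfies $|(f^{m_0})'| \ge \lambda$ on $\partial\mathbb{D}$ with $\lambda$ as large as desired, and the inverse branches of $f^{jm_0}$ enjoy bounded distortion. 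I would organize the block into sub-blocks of length $m_0$, perform the direction selection at the ends of sub-blocks, and use bounded distortion to control the uncontrolled angular contribution of the intermediate indices (which, since $a_n \to 0$, can be absorbed into $\eta$ by taking $m_0$ large enough); this delivers $J_n$ with $|J_n| \le C|J_{n-1}|/\lambda$, enough for the greedy process to persist until the block sum reaches $v$. Closing the correction lemma in this way and iterating it completes the construction and proves the theorem.
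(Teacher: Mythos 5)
Your high-level scheme --- nested closed arcs $I_k$ shrinking to a point $\xi$, with the partial sums of $\sum a_n f^n$ tracked toward $w$ and a quantitative ``correction lemma'' driving the induction --- matches the broad outline of the paper's argument. But the correction lemma you propose, and in particular the greedy term-by-term refinement used to prove it, has a genuine gap, and it is exactly the point at which the paper's argument departs from lacunary-series intuition.

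The source of the difficulty is that $f^{n+1}=f\circ f^n$ and $f'$ is bounded on $\partial\mathbb{D}$, say $|f'|\le K_1$. If $J_{n-1}$ has been chosen so that $f^{n-1}(J_{n-1})$ is an arc of length $\delta$, then $f^{n}(J_{n-1})$ is an arc of length at most $K_1\delta$. So you can force $f^n(\xi)$ into a target arc only when $K_1\delta$ is comparable to $1$, i.e.\ when the precision $\delta$ at the previous step is coarse. Equivalently, after fixing the direction of $f^n(\xi)$ up to angle $\delta$, the direction of $f^{n+k}(\xi)$ (for $\xi$ ranging over that arc) is \emph{determined} by the dynamics of $f$ up to angle $\delta K_1^k$, and only when $\delta K_1^k\gtrsim 1$ do you regain the freedom to reorient. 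Your remedy --- passing to sub-blocks of length $m_0$ with $K^{m_0}$ large and making the selection only at sub-block boundaries --- is the inevitable form this constraint takes, but it means you genuinely control one index per sub-block. The other $m_0-1$ indices contribute to the running sum a term of modulus up to $\sum_{i\ne jm_0,\ i\in\text{sub-block}}|a_i|$ pointing in a direction dictated by $f$, not by you, whereas the certified gain from the single controlled index is only $\approx|a_{(j+1)m_0}|$. There is no reason for the latter to dominate the former; it is typically smaller by a factor of order $m_0$, and increasing $m_0$ (to sharpen the one controlled direction) makes the imbalance worse, not better. Bounded distortion controls derivative ratios, not the absolute angular position of $f^i(\xi)$ at interior indices, so it does not rescue the estimate. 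A second, related problem is that the ``correct'' direction you aim for at step $n$ is $\arg(v-s_{n-1})-\arg a_n$, where $s_{n-1}$ depends on $\xi$; making $s_{n-1}$ essentially constant on $J_{n-1}$ forces $f^{n-1}(J_{n-1})$ to be short, and then $f^n(J_{n-1})$ is short too and the next selection is impossible.

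The paper circumvents this by not attempting to control individual directions at all. It uses Lemma~\ref{lem2.6} (a quasi-orthogonality/square-function estimate from \cite{N}) to guarantee, for one well-placed $\xi$, the weaker gain $\operatorname{Re}\sum a_n f^n(\xi)\ge c\,(\sum|a_n|^2)^{1/2}$, and then upgrades this $\ell^2$-type gain to an $\ell^1$-type gain $c'\sum|a_n|$ via M.~Weiss's splitting into long blocks (length $\sim T$) and short buffer blocks (length $T'\ll T$): over a long block Cauchy--Schwarz gives $(\sum|a_n|^2)^{1/2}\ge T^{-1/2}\sum|a_n|$, while the short blocks cost only a $T'/T$ fraction of $\sum|a_n|$ and buy exactly the freedom you were missing, namely time for $f^n(z_k^*)$ to re-enter the $\varepsilon$-disc about $0$ so that Lemma~\ref{lem2.6} can be applied afresh (this is where the interplay with the interior dynamics of $f$ is used). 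That yields Theorem~\ref{theo1.3} and Corollary~\ref{coro3.1}, and Theorem~\ref{theo1.1} then follows from a two-case induction on $d_k=\operatorname{dist}(w,F_k(I_k))$ together with the oscillation control of Corollary~\ref{coro2.3}. So the right fix for your lemma is not finer direction control but an a priori $L^2$ lower bound of the type in Lemma~\ref{lem2.6} together with the block decomposition; the greedy argument, as written, does not close.
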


Let $m$ denote Lebesgue measure on the unit circle normalized so that $m(\partial\mathbb{D})=1$. It has been proved in~\cite{N} that if $\sum |a_{n}|^{2}<\infty$, then the series~\eqref{eq2} converges at almost every point of the unit circle. Conversely, if $\sum |a_{n}|^{2}=\infty$, then the series~\eqref{eq2} diverges at almost every point of the unit circle. Hence if the coefficients~$\{a_{n}\}$ tend to~$0$ but $\sum|a_{n}|^{2}=\infty$, Theorem~\ref{theo1.1} provides a set $E\subset \partial\mathbb{D}$ with $m(E)=0$ such that for any $w\in\mathbb{C}$ there exists $\xi\in E$ such that $\sum a_{n}f^{n}(\xi)$ converges and its sum is~$w$. Our next result says that under appropriate conditions on the coefficients~$\{a_{n}\}$, series of the form~\eqref{eq2} lead to Peano curves. 

\begin{theorem}\label{theo1.2}
Let $f$ be a finite Blaschke product with $f(0)=0$ which is not a rotation. Let $\{a_{n}\}$ be a sequence of complex numbers with $\sum |a_{n}|<\infty$. Assume
\begin{equation}\label{eq3}
\lim_{n\to\infty}\frac{|a_{n}|}{\sum\limits_{k>n}|a_{k}|}=0.
\end{equation}
Then $F= \displaystyle\sum_{n=1}^{\infty} a_n f^{n}\colon \partial\mathbb{D}\to\mathbb{C}$ is a Peano curve, that is, $F(\partial\mathbb{D})$ contains a (non-degenerate) disc.
\end{theorem}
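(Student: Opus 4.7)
The plan is to exhibit a non-degenerate disc $D(w_0, r_0) \subset F(\partial\mathbb{D})$ by a Cantor-type inductive construction. For each target $w \in D(w_0, r_0)$, we produce a nested sequence of closed arcs $I_0 \supset I_1 \supset I_2 \supset \cdots$ on $\partial\mathbb{D}$ and points $\xi_n \in I_n$ such that the partial sums $F_n := \sum_{k=1}^{n} a_k f^k$ satisfy $|F_n(\xi_n) - w| \leq \eta_n$ with $\eta_n \to 0$. Because $\sum_k |a_k| < \infty$ makes $F$ uniformly convergent on $\partial\mathbb{D}$ with tail bounded by $R_n := \sum_{k>n}|a_k|$, any $\xi \in \bigcap_n I_n$ will satisfy $F(\xi) = w$, provided that the oscillation of $F_n$ on $I_n$ also tends to zero.

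The dynamical input is that $f|_{\partial\mathbb{D}}$ is a smooth expanding $d$-to-one covering of $\partial\mathbb{D}$, where $d := \deg f \geq 2$ (since $f$ fixes $0$ and is not a rotation). Iteration yields a sequence of Markov partitions: for each $n$, $\partial\mathbb{D}$ decomposes into $d^n$ arcs of length of order $d^{-n}$ on each of which $f^n$ is a diffeomorphism onto $\partial\mathbb{D}$. If $I$ is a generation-$n$ Markov arc and $k \leq n$, then $f^k(I)$ is a generation-$(n-k)$ arc of length of order $d^{k-n}$. Thus on $I$, the latest iterate $f^n$ sweeps the full unit circle, whereas each earlier iterate $f^k$ ($k < n$) has image arc of length of order $d^{k-n}$, tending to zero as $n-k$ grows. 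This is the analogue of the block-independence structure of $\{z^{k_n}\}$ that powers the classical Kahane--Weiss--Weiss argument.

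The inductive step runs as follows. Given a generation-$(n-1)$ arc $I_{n-1}$ and a point $\xi_{n-1} \in I_{n-1}$ with $|F_{n-1}(\xi_{n-1}) - w| \leq \eta_{n-1}$, we select any generation-$n$ sub-arc $I_n \subset I_{n-1}$. On $I_n$, as $\xi$ varies, the increment $a_n f^n(\xi)$ sweeps out the entire circle of radius $|a_n|$, while $F_{n-1}(\xi)$ varies by at most $\sum_{k < n}|a_k|\, d^{k-n}$. Choosing $\xi_n \in I_n$ to align $a_n f^n(\xi_n)$ optimally against the residual error $w - F_{n-1}(\xi_n)$ reduces the error by essentially $|a_n|$ (truncated at zero), at the cost of the oscillation loss. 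Because $|a_k| \to 0$, the oscillation losses $\mathrm{osc}(F_{n-1}, I_n) \to 0$, so both $\eta_n \to 0$ and $|F_n(\xi) - F_n(\xi_n)| \to 0$ for $\xi \in \bigcap_n I_n$, giving $F(\xi) = w$.

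The main obstacle is to select $w_0$ and a uniform $r_0 > 0$ so that the total reduction over all stages genuinely dominates the cumulative oscillation losses, producing a disc of positive radius. A crude summation gives $\sum_n \mathrm{osc}(F_{n-1}, I_n) \leq \tfrac{1}{d-1}\sum_k |a_k|$, which for small $d$ need not be strictly smaller than the total reduction budget $\sum_k |a_k|$, threatening a degenerate disc. This is precisely where hypothesis \eqref{eq3} enters: the condition $|a_n|/R_n \to 0$ allows the Markov arcs to be chosen adaptively --- slightly enlarging $I_n$ beyond the minimal generation-$n$ arc when needed --- so that at each sufficiently late stage the oscillation loss is only an $o(|a_n|)$ fraction of the reduction, while the budget $R_n$ remaining to cover the target disc still dwarfs the per-step term $|a_n|$. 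Carrying out this delicate tuning to produce a uniform $r_0 > 0$ and verifying $\eta_n \to 0$ uniformly in $w \in D(w_0, r_0)$ constitutes the technical heart of the proof.
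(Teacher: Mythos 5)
There is a genuine gap, and it is at the exact place you flag as ``the technical heart of the proof.'' The per-step mechanism you propose — reduce the error by a single term $|a_n|$ at the cost of the oscillation of $F_{n-1}$ on $I_n$ — cannot be made to close up under hypothesis~\eqref{eq3} alone. The dominant contribution to $\operatorname{osc}(F_{n-1}, I_n)$ on a generation-$n$ Markov arc is roughly $|a_{n-1}|/K$ where $K=\min_{\partial\mathbb D}|f'|>1$, so for your net gain to be positive at each step you would need $|a_{n-1}|\lesssim(K-1)|a_n|$. Hypothesis~\eqref{eq3} only controls the ratio $|a_n|/\sum_{k>n}|a_k|$; it says nothing about $|a_{n-1}|/|a_n|$. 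For instance $a_n=1/n^2$ satisfies \eqref{eq3}, $|a_{n-1}|/|a_n|\to1$, and the accumulated oscillation $\sum_k|a_k|/(K-1)$ is a \emph{fixed, positive} fraction of the total budget $\sum_k|a_k|$ rather than $o(1)$ — so your argument only closes when $K>2$, which is not guaranteed. Worse, sequences like $a_n$ alternating between $n^{-2}$ and $n^{-3}$ still satisfy \eqref{eq3} yet have $|a_{n-1}|/|a_n|\to\infty$ along a subsequence, in which case the oscillation at those steps completely swamps the reduction $|a_n|$. Your proposed remedy of ``slightly enlarging $I_n$'' makes things worse, not better: enlarging an arc increases the oscillation of the earlier iterates on it while giving you no additional alignment freedom, and shrinking it shrinks the portion of the circle $|a_n|\partial\mathbb D$ that $a_nf^n$ can reach, so you lose the ability to aim. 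You also do not address the overshoot problem when $|a_n|$ exceeds the current error.

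The paper's proof is structured precisely to avoid these two traps. First, it does not reduce term-by-term: it groups terms into \emph{long blocks} $L_{k+1}=\sum_{n=M_{k+1}}^{N_{k+1}}a_nf^n$ whose length is chosen adaptively so that $\sum_{n\in\mathcal L_{k+1}}|a_n|$ is comparable to the current error $d_k$, and then invokes Corollary~\ref{coro3.1} (a consequence of the technical Theorem~\ref{theo1.3}) to reduce the error by $\eta\sum_{n\in\mathcal L_{k+1}}|a_n|\sim\eta^2 d_k$ in a single move, with the overshoot automatically controlled by the hypothesis $\sum|a_n|\le\eta|w|$ of the corollary. Second, between consecutive long blocks it inserts a \emph{fixed-length gap} of $T=T(f)$ terms (the ``short block'' $S_k$); by Pommerenke's exponential estimate, $T$ iterations pull $z_k^*$ back close to the origin, guaranteeing that the next long block again has full alignment freedom, while Lemma~\ref{lem2.1} gives an exponential damping $K^{n-N_k}$ of the oscillation of the earlier iterates. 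Hypothesis~\eqref{eq3}, via Lemma~\ref{lem2.7} and estimate~\eqref{des1}, is then used to show the gap cost and the damped-oscillation cost are both $o\bigl(\sum_{k>N_k}|a_k|\bigr)$, i.e.\ negligible against $d_k$. Neither the blocking nor the gap is present in your sketch, and both are essential; the single-term Markov-arc scheme does not produce a disc of positive radius under the stated hypothesis.
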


The proofs of these results are based on delicate inductive constructions which use some techniques due to Weiss (\cite{W}) but we also need several new ideas which arise from the beautiful interplay between dynamical properties of a Blaschke product as a selfmapping of~$\partial\mathbb{D}$ and those as a selfmapping of the unit disc~$\mathbb{D}$. It is worth mentioning that no lacunarity assumption is needed in our results. The following more technical result plays a central role in our arguments.

\begin{theorem}\label{theo1.3}
Let $f$ be a finite Blaschke product with $f(0)=0$ which is not a rotation. Then there exist constants $\varepsilon=\varepsilon(f)>0$ and $ 0 < c=c(f) < 1$ such that the following statement holds. Let $M<N$ be positive integers, let $z\in\mathbb{D}$ with $|f^{M}(z)|<\varepsilon$ and let $\{a_{n}:M\le n\le N\}$ be a collection of complex numbers. Then there exists a point $\xi\in\partial\mathbb{D}$ with $|\xi-z|\le c^{-1} (1-|z|)$ such that
\begin{equation}\label{eq3.5}
\operatorname{Re}\left(\sum^{N}_{n=M}a_{n} f^{n}(\xi)\right)\ge c\sum^{N}_{n=M}|a_{n}|.
\end{equation}
\end{theorem}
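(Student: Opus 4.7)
The approach is to reduce the theorem, via the hypothesis $|f^M(z)|<\varepsilon$, to an itinerary-shadowing problem on $\partial\mathbb{D}$, and then to construct $\xi$ by nested selection of arcs exploiting the expansion of $f$ on the boundary. The first step is a seeding lemma: there exist $\varepsilon=\varepsilon(f)>0$ and $C=C(f)>0$ such that whenever $|f^M(z)|<\varepsilon$, there is an arc $I^{\ast}\subset\partial\mathbb{D}$ with $I^{\ast}\subset\{\xi:|\xi-z|\le C(1-|z|)\}$ on which $f^M$ is a homeomorphism onto $\partial\mathbb{D}$. Its proof uses that $|f'(0)|<1$ by Schwarz (so $0$ is attracting), standard Koebe-type distortion estimates for a univalent branch of $(f^M)^{-1}$ landing at $z$ --- applied to the preimage annulus $\{\varepsilon/2<|w|<\varepsilon\}$ when $0$ is a critical point of $f$, to avoid branch points --- together with the Schwarz-Pick inequality $|(f^M)'(z)|(1-|z|^2)\le 1-|f^M(z)|^2$; these combine to give $|(f^M)'(\xi)|\gtrsim 1/(1-|z|)$ on boundary points $\xi$ in the shadow of $z$, with multiplicative constant depending only on $f$ and $\varepsilon$. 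Integrating this lower bound along the boundary arc of length $C(1-|z|)$ around $z/|z|$ produces an $f^M$-image of arc length at least $2\pi$, from which $I^{\ast}$ follows after passing to a subarc of injectivity. Via the substitution $\eta=f^M(\xi)$, $b_k:=a_{M+k}$, the theorem reduces to: there exists $c_0>0$ depending only on $f$ such that for every finite sequence $\{b_k\}_{k=0}^{K}$ one can find $\eta\in\partial\mathbb{D}$ with $\operatorname{Re}\sum_{k=0}^K b_k f^k(\eta)\ge c_0\sum_k|b_k|$.

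\textbf{Itinerary construction.} For the reduced statement, set $J_k:=\{\eta\in\partial\mathbb{D}:\operatorname{Re}(b_k\eta)\ge c_0|b_k|\}$, an arc of length bounded below by some $\delta_0>0$ as soon as $c_0$ is chosen small. The plan is to produce $\eta$ with $f^k(\eta)\in J_k$ for every $k\le K$, which gives the conclusion term by term with $c=c_0$. Since $f$ is not a rotation, it acts as an eventually expanding Markov covering of $\partial\mathbb{D}$ of degree $d\ge 2$: for every $L$ the circle partitions into $d^L$ level-$L$ pieces on each of which $f^L$ is a homeomorphism onto $\partial\mathbb{D}$ with bounded distortion, and their diameters tend to $0$ geometrically in $L$. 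Pick $L$ large enough that level-$L$ Markov pieces have diameter at most $\delta_0/4$, and construct a nested chain $C_0\supset C_1\supset\cdots$ with $C_0=\partial\mathbb{D}$ and each $f^{\ell L}|_{C_\ell}$ a homeomorphism onto $\partial\mathbb{D}$, by selecting at super-step $\ell$ a level-$L$ Markov piece $K\subset\partial\mathbb{D}$ whose forward orbit $K,f(K),\ldots,f^{L-1}(K)$ visits $J_{\ell L},\ldots,J_{\ell L+L-1}$ in order, and pulling $K$ back through $f^{\ell L}|_{C_\ell}$ to define $C_{\ell+1}$. Since $f^L(K)=\partial\mathbb{D}$, the inductive hypothesis is restored. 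Existence of such a $K$ is a counting argument on the tree of inverse branches: a positive density of level-$(L-m)$ Markov pieces lies inside $J_{\ell L+m}$ for each $m$, and the combined constraints are mutually compatible once $L$ is large enough. Any $\eta$ in the intersection works, and setting $c:=\min(c_0,1/C)$ gives the constant in the statement.

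\textbf{Main obstacle.} The delicate step is the seeding lemma, where one must transmute the purely interior estimate $|f^M(z)|<\varepsilon$ into the boundary statement that $f^M$ restricts to a homeomorphism from an arc of length $\asymp 1-|z|$ onto all of $\partial\mathbb{D}$, uniformly in $M$: this is precisely the interplay between the disc dynamics of $f$ and its boundary dynamics emphasized in the introduction, and the constants $\varepsilon$ and $c$ have to be calibrated jointly. A secondary technical hurdle is the case $\deg f=2$ in the itinerary construction, since then level-$1$ pieces of $f$ are too large to fit inside the target arcs $J_n$; this forces one to work at a sufficiently large exponent $L$ and to verify that the $L$ simultaneous inclusions $f^m(K)\subset J_{\ell L+m}$, $m=0,\ldots,L-1$, can be jointly satisfied, which is done by standard pigeonhole on Markov partitions of small diameter.
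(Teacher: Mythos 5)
In your ``Itinerary construction'' you try to find $\eta\in\partial\mathbb{D}$ with $f^{k}(\eta)\in J_{k}$ for every $k$, where $J_{k}$ is an arc of normalized measure $\arccos(c_{0})/\pi<1/2$. This would give the conclusion term by term, but it is impossible in general. Take $f(z)=z^{2}$; then $f^{k}(\eta)=\eta^{2^{k}}$ and the level-$\ell$ Markov pieces are dyadic arcs of measure $2^{-\ell}$, so $f^{L-1}(K)$ is always a level-$1$ arc of measure $\emph{exactly}$ $1/2$. No such arc can be contained in $J_{L-1}$, whose measure is strictly less than $1/2$, no matter how large $L$ is; enlarging $L$ shrinks $f^{m}(K)$ for small $m$ but leaves the last few steps unchanged. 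More quantitatively, if one follows the admissible region $A_{n}\subset J_{n}$ forward, the adversary may place $J_{n+1}$ so that $m(A_{n+1}) = 2 m(A_{n})- (1/2+\epsilon_0)$, which satisfies $(1/2+\epsilon_0)-m(A_{n+1})=2\bigl((1/2+\epsilon_0)-m(A_{n})\bigr)$; since $m(A_{0})\le 1/2-\epsilon_0$, the deficit doubles at every step and $A_{n}$ becomes empty after $O(\log 1/\epsilon_0)$ iterations. So for adversarially chosen phases $\arg a_{n}$ the itinerary dies in finite time, and no ``standard pigeonhole on Markov partitions of small diameter'' can rescue it. The paper explicitly flags this: the easy argument (their inequality (10)) works only for Blaschke products whose boundary map blows up arcs of length $1/2-\varepsilon$ to the whole circle, and ``there are finite Blaschke products which do not satisfy (10)''. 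Your construction is essentially that easy argument dressed in Markov-partition language, and it fails for precisely those $f$ (including the lacunary prototype $f(z)=z^{2}$).

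\textbf{What is missing.} The decisive idea you do not have is Weiss's long/short block decomposition. One partitions $\{M,\dots,N\}$ into alternating ``long'' blocks (length between $T$ and $3T$) and ``short'' blocks (length $T'\ll T$), chooses the short blocks so that they carry at most a fraction $T'/T$ of the total $\ell^{1}$ mass, and then on each long block one gets a gain proportional to $\bigl(\sum_{\mathcal L_j}|a_{n}|^{2}\bigr)^{1/2}\ge (3T)^{-1/2}\sum_{\mathcal L_j}|a_{n}|$ via Cauchy–Schwarz. The short blocks are sacrificed—their contribution is bounded trivially—but they provide the $T'$ iterations of $f$ needed to push the interior point $z^{*}_{k}$ back inside $\{|f^{M_{k+1}}|\le\varepsilon\}$ via the Denjoy--Wolff convergence $f^{n}\to 0$. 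This is what lets the inductive construction of nested arcs restart at each long block without demanding a per-index inclusion $f^{n}(\eta)\in J_{n}$. Without some version of this sacrifice mechanism, the proof cannot reach a positive constant $c$ uniform over all coefficient sequences.

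\textbf{A secondary concern about the seeding lemma.} Your reduction replaces the interior hypothesis $|f^{M}(z)|<\varepsilon$ by ``$f^{M}$ is a homeomorphism from a subarc of $\{|\xi-z|\le C(1-|z|)\}$ onto all of $\partial\mathbb{D}$''. This does hold for $\varepsilon$ small enough, because Lemma~2.4(a) applied to $f^{M}$ gives a universal lower bound $m(f^{M}(I(z)))\ge\delta(\gamma)\gtrsim\log(1/\gamma)$, and once this exceeds $1$ one finds a full covering subarc. However, it forces $\varepsilon$ to be taken quite small, and the paper instead uses the softer Lemma~2.6 (from [N]), which works with a fixed $\varepsilon(f)$ and yields an $\ell^{2}$ gain on each block; this is precisely what feeds the Cauchy--Schwarz step above. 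Even if your seeding lemma is fixed carefully, it only hands the problem downstream to the itinerary step, which, as explained, collapses.
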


The paper is organized as follows. Several auxiliary results are collected in Section~\ref{sec2}. These are used in Section~\ref{sec3} where we present the proofs of our main results. Finally, in Section~\ref{sec4} we prove a version of the classical Abel's Theorem in our context, sketch a proof of a generalization of Theorem~\ref{theo1.1} and conclude mentioning several open problems.

\section{Auxiliary results}\label{sec2}

Let $g$ be an analytic mapping from $\mathbb{D}$ into itself with $g(0)=0$  which is not a rotation. The classical Denjoy-Wolff Theorem says that the iterates $g^n$ converge uniformly to $0$ on compacts of $\mathbb{D}$. Actually Pommerenke proved the following exponential decay: there exist constants $0<a=a(f)<1$ such that $|g^n (z)| \leq a^n (1-|z|)^{-13}$ for any $z \in \mathbb{D}$. See \cite{P}. 

A finite Blaschke product $f$ is a finite product of automorphisms of~$\mathbb{D}$, that is,
$$
f(z)=\prod^{N}_{n=1}\frac{z-z_{n}}{1-\bar z_{n}z},\quad z\in\mathbb{C},
$$
where $z_{1},\dotsc,z_{n}\in\mathbb{D}$ are the zeros of~$f$. Observe that
\begin{equation}\label{eq4}
i\xi \frac{f'(\xi)}{f(\xi)} =\sum^{N}_{n=1}P(z_{n},\xi),\quad \xi\in\partial\mathbb{D},
\end{equation}
where $P(z,\xi)=(1-|z|^{2})|\xi-z|^{-2}$ is the Poisson kernel at the point~$z$. Hence
$$
f(e^{i\theta}) \overline{f(1)} =e^{i\psi (\theta)},\quad 0\le \theta\le 2\pi,
$$
where
$$
\psi(\theta)=\int^{\theta}_{0}\sum^{N}_{n=1} P(z_{n}, e^{it})\,dt.
$$
Note that $\psi$ is a real analytic branch of the argument of $f (e^{i\theta}) \overline{f(1)}$ which is increasing and satisfies
\begin{equation}\label{eq5}
f(e^{i\theta}) \overline{f(e^{i\varphi})} =\exp \left( i\int^{\theta}_{\varphi}\sum^{N}_{n=1}P(z_{n}, e^{it})\,dt\right),\quad 0\le \varphi\le \theta\le 2\pi.
\end{equation}
Let $f$ be a finite Blaschke product with $f(0)=0$ which is not a rotation. Note that \eqref{eq4} gives that $\min \{|f'(\xi)|:\xi\in\partial\mathbb{D}\}>1$ and the mapping $f\colon \partial\mathbb{D}\to \partial \mathbb{D}$ is expanding in the sense that $m(f(I))>m(I)$ for any arc $I\subset \partial\mathbb{D}$ with $m(I)<1$. Our first auxiliary result points in this direction.

\begin{lemma}\label{lem2.1}
Let $f$ be a finite Blaschke product with $f(0)=0$ which is not a rotation. Consider $K=K(f)=\min \{|f'(\xi)|:\xi\in\partial\mathbb{D}\}>1$. Let $N$ be a positive integer and let $I\subset \partial\mathbb{D}$ be an arc such that $m(f^{N}(I))=\delta<1$. Then
\begin{equation}\label{eq6}
|f^{k}(\xi)-f^{k}(\xi')|\le 2\pi \delta K^{k-N}
\end{equation}
for any $\xi,\xi'\in I$ and any integer $0 < k\le N$.
\end{lemma}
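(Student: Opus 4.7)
The key observation is that formula \eqref{eq4} identifies $|f'(\xi)|$ for $\xi\in\partial\mathbb{D}$ with $\sum_n P(z_n,\xi)$, which is exactly the factor by which $f$ stretches arc length along the circle. Since $|f'|\ge K>1$ uniformly, $f$ is uniformly expanding on $\partial\mathbb{D}$. My plan is to propagate this expansion bound backwards from step $N$ down to step $k$ in order to control the arc length of $f^k(I)$, and then to bound the Euclidean chord distance by the arc length.

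To handle the fact that $f\colon\partial\mathbb{D}\to\partial\mathbb{D}$ has degree $d=\deg f\ge 2$ and is therefore not globally injective, I would work on the universal cover of the circle. Let $\tilde f\colon\mathbb{R}\to\mathbb{R}$ be the strictly increasing lift of $\theta\mapsto\arg\bigl(f(e^{i\theta})\overline{f(1)}\bigr)$ provided by~\eqref{eq5}, so that $\tilde f'(\theta)=|f'(e^{i\theta})|\ge K$ and $\tilde f(\theta+2\pi)=\tilde f(\theta)+2\pi d$. Lift $I$ to an interval $\tilde I\subset\mathbb{R}$ of length $2\pi m(I)$. Each iterate $\tilde f^{\,k}(\tilde I)$ is then a closed interval of $\mathbb{R}$, and the chain rule together with $\tilde f'\ge K$ yields the one-step bound $|\tilde f^{\,k+1}(\tilde I)|\ge K\,|\tilde f^{\,k}(\tilde I)|$.

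The hypothesis $m(f^{N}(I))=\delta<1$ now forces $|\tilde f^{\,N}(\tilde I)|=2\pi\delta$: if this length exceeded $2\pi$ the projection would wrap once around $\partial\mathbb{D}$ and give $m(f^{N}(I))=1$, whereas for length at most $2\pi$ the projection is injective and preserves length. Iterating the one-step bound backwards from $k=N$ then yields
\[
|\tilde f^{\,k}(\tilde I)|\le K^{k-N}\,|\tilde f^{\,N}(\tilde I)|=2\pi\delta\,K^{k-N},\qquad 0<k\le N.
\]
Since each such length is strictly less than $2\pi$, $f^{k}(I)$ is a genuine arc of $\partial\mathbb{D}$ whose arc length equals $|\tilde f^{\,k}(\tilde I)|$. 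For $\xi,\xi'\in I$, the points $f^{k}(\xi),f^{k}(\xi')$ both lie on this arc, so their Euclidean chord distance is bounded by its arc length, giving \eqref{eq6}. The only delicate point is to verify at each step that $\tilde f^{\,k}(\tilde I)$ does not wrap around $\partial\mathbb{D}$ under projection; this is automatic once the length bound at step~$N$ is secured and propagated backwards via the one-step expansion. No new ideas beyond \eqref{eq4} and \eqref{eq5} are required.
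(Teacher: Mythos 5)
Your proposal is correct and follows essentially the same route as the paper: lift to the universal cover (equivalently, use the increasing continuous branch of $\operatorname{Arg} f^N$), observe that $m(f^N(I))<1$ forces $f^N$ to be injective on $I$, propagate the uniform expansion $|f'|\ge K$ backwards to bound $m(f^k(I))\le\delta K^{k-N}$, and finish by noting that a chord is no longer than the arc it subtends --- the paper packages this same reasoning as $\delta=\int_I|(f^N)'|\,dm\ge K^{N-k}\int_I|(f^k)'|\,dm=K^{N-k}m(f^k(I))$. One small imprecision: the lift you take from \eqref{eq5} carries the rotation factor $\overline{f(1)}$, so its iterates are not lifts of $f^k$ but of $f^k$ conjugated by rotations; drop that factor and take the increasing lift of $\arg f$ itself, after which the rest of your argument is unaffected.
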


\begin{proof}
Since there exists an increasing continuous branch of $\operatorname{Arg} (f^{N})$ and $m(f^{N}(I))<1$, the mapping $f^{N} \colon I\to f^{N}(I)$ is one to one. Then for any integer $0<k\le N$ we have
\begin{equation*}
\begin{split}
\delta&= \int_{I}|(f^{N})'|\,dm= \int_{I} |(f^{N-k})' (f^{k})| |(f^{k})'|\,dm\\
&\ge K^{N-k} \int_{I}|(f^{k})'|\,dm =K^{N-k} m(f^{k}(I)).
\end{split}
\end{equation*}
We deduce that $m(f^{k}(I))\le \delta K^{k-N}$. Since $f^{k}(I)\subset \partial\mathbb{D}$ is an arc, the estimate~\eqref{eq6} follows.
\end{proof}

For future reference we now state two easy consequences of Lemma~\ref{lem2.1}.

\begin{corollary}\label{coro2.2}
Let $f$ be a finite Blaschke product with $f(0)=0$ which is not a rotation. Then there exists a constant~$c=c(f)>0$ such that if $I\subset \partial\mathbb{D}$ is an arc, $M<N$ are positive integers with $m(f^{N}(I))=\delta <1$ and $\{a_{n}: M\le n\le N\}$ is a collection of complex numbers, then
$$
\left| \sum^{N}_{n=M} a_{n}(f^{n}(\xi)-f^{n}(\xi'))\right| \le c\delta \left(\sum^{N}_{n=M}|a_{n}|^{2}\right)^{1/2},\quad \xi,\xi'\in I.
$$
\end{corollary}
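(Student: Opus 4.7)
The plan is to apply Lemma~\ref{lem2.1} term by term to the sum and then use the Cauchy-Schwarz inequality to package the resulting geometric decay into the required $\ell^{2}$ norm.

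First I would use the triangle inequality to write
\[
\left|\sum_{n=M}^{N} a_{n}(f^{n}(\xi)-f^{n}(\xi'))\right|\le \sum_{n=M}^{N}|a_{n}|\,|f^{n}(\xi)-f^{n}(\xi')|,
\]
and then insert the pointwise estimate from Lemma~\ref{lem2.1}, namely $|f^{n}(\xi)-f^{n}(\xi')|\le 2\pi\delta\, K^{n-N}$ (valid for each $1\le n\le N$, hence in particular for $M\le n\le N$). This reduces the problem to controlling $2\pi\delta\sum_{n=M}^{N}|a_{n}|K^{n-N}$.

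Next I would apply Cauchy-Schwarz to the weighted sum:
\[
\sum_{n=M}^{N}|a_{n}|K^{n-N}\le \left(\sum_{n=M}^{N}|a_{n}|^{2}\right)^{1/2}\left(\sum_{n=M}^{N}K^{2(n-N)}\right)^{1/2}.
\]
The second factor is dominated by a convergent geometric series: setting $j=N-n$, one has $\sum_{n=M}^{N}K^{2(n-N)}\le \sum_{j=0}^{\infty}K^{-2j}=K^{2}/(K^{2}-1)$, which is a finite constant depending only on $K=K(f)$, hence only on $f$.

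Combining the two steps produces the stated estimate with the constant
\[
c=2\pi\left(\frac{K^{2}}{K^{2}-1}\right)^{1/2},
\]
which depends only on $f$. There is no real obstacle here: the crucial input is the exponential contraction rate toward the endpoint $N$ supplied by Lemma~\ref{lem2.1}, and the only mild care needed is to verify that the $K^{n-N}$ factors are genuinely summable, which is guaranteed by $K>1$.
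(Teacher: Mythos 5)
Your proof is correct and follows exactly the paper's argument: triangle inequality, the pointwise bound $|f^{n}(\xi)-f^{n}(\xi')|\le 2\pi\delta K^{n-N}$ from Lemma~\ref{lem2.1}, then Cauchy--Schwarz with the geometric series $\sum K^{2(n-N)}$ summed to a constant depending only on $K=K(f)>1$. The only cosmetic difference is in the exact form of the constant $c$, which is immaterial.
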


\begin{proof}
Consider $K=\min \{|f'(\xi)|:\xi\in\partial\mathbb{D}\}$. Since $f(0)=0$ and $f$ is not a rotation, identity \eqref{eq4} gives $K>1$. Lemma~\ref{lem2.1} and Cauchy--Schwarz inequality give
$$
\sum^{N}_{n=M}|a_{n}| |f^{n}(\xi)-f^{n}(\xi')|\le \frac{2\pi \delta}{(K^{2} - 1)^{1/2}} \left(\sum^{N}_{n=M}|a_{n}|^{2}\right)^{1/2}.
$$
Taking $c=2\pi (K^{2} - 1)^{-1/2}$, the proof is completed.
\end{proof}

\begin{corollary}\label{coro2.3}
Let $f$ be a finite Blaschke product with $f(0)=0$ which is not a rotation. Fix $0<\delta<1$. Let $\{a_{n}\}$ be a sequence of complex numbers tending to~$0$. For $N=1,2,\dotsc$, let $I_{N}\subset\partial\mathbb{D}$ be an arc such that $m(f^{N}(I_{N}))=\delta$. Then
$$
\max \left\{\left| \sum^{N}_{n=1} a_{n} (f^{n} (\xi)-f^{n}(\xi'))\right| :\xi,\xi'\in I_{N}\right\}\underset{N\to\infty}{\longrightarrow} 0.
$$
\end{corollary}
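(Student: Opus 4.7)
The plan is to apply Lemma~\ref{lem2.1} termwise and reduce the statement to the elementary fact that the convolution of a null sequence with a decaying geometric weight is itself a null sequence. Let $K=K(f)>1$ be the constant from Lemma~\ref{lem2.1}. Since $m(f^N(I_N))=\delta<1$, that lemma applied to $I=I_N$ gives
$$
|f^n(\xi)-f^n(\xi')|\le 2\pi\delta\, K^{n-N},\qquad 1\le n\le N,\ \xi,\xi'\in I_N,
$$
so
$$
\sup_{\xi,\xi'\in I_N}\left|\sum_{n=1}^N a_n\bigl(f^n(\xi)-f^n(\xi')\bigr)\right|\le 2\pi\delta\, S_N,\qquad S_N:=\sum_{n=1}^N |a_n|\,K^{n-N}.
$$
It therefore suffices to prove $S_N\to 0$ as $N\to\infty$.

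For this, fix $\varepsilon>0$ and use $a_n\to 0$ to choose $n_0$ such that $|a_n|\le \varepsilon(1-1/K)$ for every $n>n_0$. Let $A=\sup_n |a_n|<\infty$, assume $N>n_0$, and split
$$
S_N=\sum_{n=1}^{n_0}|a_n|\,K^{n-N}+\sum_{n=n_0+1}^{N}|a_n|\,K^{n-N}.
$$
The head is bounded by $A\,n_0\, K^{n_0-N}$, which tends to $0$ as $N\to\infty$ since $K>1$. The tail is bounded by $\varepsilon(1-1/K)\sum_{n=n_0+1}^{N}K^{n-N}\le\varepsilon$. Hence $\limsup_{N}S_N\le\varepsilon$, and since $\varepsilon$ was arbitrary, $S_N\to 0$.

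There is no genuine obstacle here; the argument is essentially Lemma~\ref{lem2.1} followed by a standard Abel-type splitting. The only mildly delicate point is that $\{|a_n|\}$ need not be summable, so one cannot pull the supremum out and estimate crudely; instead one must exploit the asymmetry of the weight $K^{n-N}$ — very small for $n$ far below $N$, only moderate for $n$ close to $N$ — against the vanishing of $|a_n|$ for large $n$.
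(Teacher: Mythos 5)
Your proof is correct and follows essentially the same route as the paper's: apply Lemma~\ref{lem2.1} termwise to reduce to estimating $\sum_{n=1}^N |a_n|K^{n-N}$, then split the sum into a head (killed by the geometric weight) and a tail (killed by $a_n\to 0$). The only cosmetic difference is that you split at a fixed $n_0$ depending on $\varepsilon$ and take a $\limsup$, whereas the paper splits at a slowly growing threshold $L=L(N)$; the two bookkeeping schemes are equivalent.
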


\begin{proof}
Consider $K=\min \{|f'(\xi)|:\xi\in\partial\mathbb{D}\}$. As before, identity \eqref{eq4} gives $K>1$. Let $L=L(N)$ be a positive integer to be fixed later satisfying $1<L<N$. Since $m(f^{N}(I_{N}))=\delta$, Lemma~\ref{lem2.1} gives
\begin{equation*}
\begin{split}
\sum^{N}_{n=1}|a_{n}| |f^{n}(\xi)-f^{n}(\xi')| &\le 2\pi \delta \sup_{n}|a_{n}| \sum^{L}_{n=1}K^{n-N}+2\pi\delta  \sup_{n>L} |a_{n}|  \sum^{N}_{n=L+1}K^{n-N} \\*[5pt]
&\le 2\pi\delta  \sup_{n}|a_{n}| \frac{K^{L-N+1}}{K-1} +2\pi \delta \sup_{n>L}|a_{n}|  \frac{K}{K-1}
\end{split}
\end{equation*}
which tends to $0$ as $N$ tends to $\infty$, if $L$ is chosen such that both $L$ and $N-L$ tend to~$\infty$ as $N$ tends to~$\infty$.
\end{proof}

Given a point $z\in\mathbb{D}\backslash\{0\}$, let $I(z)$ denote the arc of the unit circle centered at~$z/|z|$ with $m(I(z))=1-|z|$. Conversely given an arc $I\subsetneq \partial\mathbb{D}$ let $z(I)$ be the point in~$\mathbb{D}$ satisfying $I(z(I))=I$. Let $\rho(z,w)$ denote the pseudohyperbolic distance between $z,w\in\mathbb{D}$ given by 
$$
\rho(z,w)=\frac{|z-w|}{|1-\bar wz|}.
$$

\begin{lemma}\label{lem2.4}
\begin{enumerate}
\item[(a)] For any $0<\gamma<1$, there exists $\delta=\delta(\gamma)>0$ such that if $f$ is a finite Blaschke product and $z\in\mathbb{D}$ satisfies $|f(z)|\le \gamma$, then $m(f(I(z)))\ge\delta$.

\item[(b)] Let $f$ be a finite Blaschke product with $f(0)=0$ which is not a rotation.~Then, given $0<\delta<1$, there exists $0<\gamma=\gamma(\delta,f)<1$ such that  if $N$ is a positive integer, $I\subset \partial\mathbb{D}$ is an arc with $m(f^{N}(I))=\delta$, then $|f^{N}(z(I))|\le\gamma$.
\end{enumerate}
\end{lemma}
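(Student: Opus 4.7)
\textit{Proof plan.}

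\emph{Part (a).} The plan is to push harmonic measure across $f$. For any analytic self-map $g:\mathbb{D}\to\mathbb{D}$ and Borel $B\subseteq\partial\mathbb{D}$, the composition of the Poisson extension of $\mathbf{1}_B$ with $g$ is the Poisson extension of $\mathbf{1}_{g^{-1}(B)\cap\partial\mathbb{D}}$, so $\omega(g(z),B,\mathbb{D})=\omega(z,g^{-1}(B)\cap\partial\mathbb{D},\mathbb{D})$. Applying this with $g=f$ and $B=f(I(z))$, and noting $I(z)\subseteq f^{-1}(f(I(z)))$, we get
\[
\omega(f(z),f(I(z)),\mathbb{D})\ge \omega(z,I(z),\mathbb{D})\ge c_0,
\]
where $c_0>0$ is a universal constant coming from an elementary Poisson kernel lower bound on the arc $I(z)$ of length $1-|z|$ at its base point $z$. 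On the other hand, the trivial estimate $P(w,\eta)\le 2/(1-|w|)$ yields
\[
\omega(f(z),f(I(z)),\mathbb{D})\le \frac{2\,m(f(I(z)))}{1-|f(z)|}.
\]
Combining, $m(f(I(z)))\ge c_0(1-|f(z)|)/2\ge c_0(1-\gamma)/2=:\delta(\gamma)$.

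\emph{Part (b).} The plan is to apply the iterated Schwarz--Pick inequality $1-|f^N(z)|^2\ge (1-|z|^2)|(f^N)'(z)|$ at $z=z(I)$, where $1-|z(I)|^2\ge m(I)$, so everything reduces to showing $m(I)\,|(f^N)'(z(I))|\gtrsim\delta$. Let $\xi$ be the center of $I$ and $\xi_k=f^k(\xi)\in\partial\mathbb{D}$. From Lemma~\ref{lem2.1} applied with $\eta,\eta'\in I$ and the Lipschitz continuity of $\log|f'|$ in a neighborhood of $\partial\mathbb{D}$ (where $|f'|\ge K>0$), summing along the orbit gives the bounded distortion $m(I)\,|(f^N)'(\xi)|\asymp \delta$, with constants depending only on $\delta$ and $f$. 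It remains to compare $|(f^N)'(z(I))|$ with $|(f^N)'(\xi)|$. Writing $z_k=f^k(z(I))$ and $\epsilon_k=|z_k-\xi_k|$, Taylor expansion yields $\epsilon_{k+1}\le (|f'(\xi_k)|+L\epsilon_k)\epsilon_k$. The naive linearization $\epsilon_k\lesssim \epsilon_0|(f^k)'(\xi)|$ together with the identity $|(f^k)'(\xi)|=|(f^N)'(\xi)|/|(f^{N-k})'(\xi_k)|$ and the lower bound $|(f^{N-k})'(\xi_k)|\ge K^{N-k}$ then give $\epsilon_k\lesssim \delta\,K^{k-N}$, so
\[
\sum_{k=0}^{N-1}\epsilon_k\lesssim \delta\sum_{k=0}^{N-1}K^{k-N}\le \frac{\delta\,K}{K-1},
\]
uniformly in $N$. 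Hence $\log\bigl(|(f^N)'(z(I))|/|(f^N)'(\xi)|\bigr)=\sum_k\log|f'(z_k)/f'(\xi_k)|$ is bounded in absolute value by $L'\sum_k\epsilon_k$ (where $L'$ is the Lipschitz constant of $\log|f'|$), giving $|(f^N)'(z(I))|\ge e^{-L'C(\delta,f)}|(f^N)'(\xi)|$. Feeding this back,
\[
1-|f^N(z(I))|^2\gtrsim m(I)\,|(f^N)'(\xi)|\gtrsim \delta,
\]
so $\gamma=\sqrt{1-c(\delta,f)}<1$ works.

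\emph{Main obstacle.} The subtle point in (b) is the uniform control of $\sum_k\epsilon_k$. The recursion multiplies by the potentially large factor $|f'(\xi_k)|$ at each step, and a crude mean-value bound $\epsilon_{k+1}\le \max_{\partial\mathbb{D}}|f'|\cdot\epsilon_k$ would make $\sum \epsilon_k$ blow up whenever $\max|f'|>K$, which is the generic case. One must tie the growth of $|(f^k)'(\xi)|$ to the exponential decay of $m(I)$ supplied by Lemma~\ref{lem2.1}, using the multiplicative relation $|(f^k)'(\xi)||(f^{N-k})'(\xi_k)|=|(f^N)'(\xi)|$ together with $|(f^{N-k})'(\xi_k)|\ge K^{N-k}$, in order to collapse the sum to a geometric one. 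A bootstrap is also needed to justify discarding the $L\epsilon_k$ term in the Taylor recursion, which is legitimate because the resulting bound on $\epsilon_k$ is itself $O(\delta)$.
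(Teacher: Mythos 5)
\emph{Part (a): correct, genuinely different route.} Your harmonic-measure pushforward argument works. Since $f$ is a finite Blaschke product it extends analytically across $\partial\mathbb{D}$, so the identity $\omega(f(z),B,\mathbb{D})=\omega(z,f^{-1}(B)\cap\partial\mathbb{D},\mathbb{D})$ is legitimate (take $P[\mathbf{1}_B]\circ f$, identify its nontangential boundary values, and invoke uniqueness of bounded harmonic extensions). Combined with $I(z)\subseteq f^{-1}(f(I(z)))$, the universal lower bound $\omega(z,I(z))\ge c_{0}$, and $P(w,\cdot)\le 2/(1-|w|)$, you get $m(f(I(z)))\ge \tfrac{c_{0}}{2}(1-|f(z)|)\ge \tfrac{c_{0}}{2}(1-\gamma)$. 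The paper instead writes $m(f(I(z)))=\int_{I(z)}\sum_{n}P(z_{n},\xi)\,dm(\xi)$ via \eqref{eq4}, first treats the case where some zero $z_{n}$ is pseudohyperbolically close to $z$ separately, and for the remaining case relates the Poisson-kernel sum to $\log|f(z)|^{-1}$. Both yield the stronger estimate $m(f(I(z)))\gtrsim 1-|f(z)|$; your version avoids the case split and is arguably cleaner.

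\emph{Part (b): genuine gap, and the paper's route avoids your obstacle entirely.} You are right that the naive linearization $\epsilon_{k}\lesssim\epsilon_{0}|(f^{k})'(\xi)|$ is exactly what needs proof, but the bootstrap you invoke does not close for all $0<\delta<1$. Running the recursion $\epsilon_{k+1}\le(|f'(\xi_{k})|+L\epsilon_{k})\epsilon_{k}$ under the ansatz $\epsilon_{k}\le A\,\epsilon_{0}|(f^{k})'(\xi)|$, and using $\epsilon_{0}|(f^{k})'(\xi)|\lesssim\delta K^{k-N}$, forces a self-consistency condition of the shape $A\ge\exp\!\bigl(c(f)\,A\,\delta\bigr)$, which is solvable only when $\delta$ is small relative to $f$. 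For the same reason, your use of a Lipschitz bound on $\log|f'|$ along $\{z_{k}\}$ requires $z_{k}$ to remain inside a fixed annulus near $\partial\mathbb{D}$ disjoint from the critical points of $f$, and that too relies on $\epsilon_{k}$ being small, hence on $\delta$ small. Neither problem is fatal: the general case reduces to small $\delta$ by passing to a concentric subarc $I'\subset I$ with $m(f^{N}(I'))=\delta_{0}$ small (the boundary bounded-distortion gives $m(I')/m(I)\asymp_{\delta,f}1$, so $\rho(z(I'),z(I))$ is bounded away from $1$ and Schwarz--Pick transfers $|f^{N}(z(I'))|\le\gamma_{0}$ to $|f^{N}(z(I))|\le\gamma$), but as written your plan is incomplete. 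The paper sidesteps the entire orbit comparison. After establishing the same boundary bounded distortion \eqref{eq7}, it observes that $m(f^{N}(J))\gtrsim\delta\,m(J)/m(I)$ for every measurable $J\subset I$; this spreading lower bound gives $m(I)^{-1}\int_{I}|f^{N}-f^{N}(z(I))|^{2}\,dm\ge c_{6}(\delta,f)$; and since $P(z(I),\cdot)\gtrsim m(I)^{-1}$ on $I$, the variance identity $\int_{\partial\mathbb{D}}|f^{N}-f^{N}(z(I))|^{2}P(z(I),\cdot)\,dm=1-|f^{N}(z(I))|^{2}$ forces $1-|f^{N}(z(I))|^{2}\ge c_{6}c_{7}$. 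No iteration, no bootstrap, and it works uniformly in $\delta$ at once.
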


\begin{remark}\label{rem2.5}
It is worth mentioning that the converse of the estimate in~(a) does not hold, that is, $|f(z)|$ could be arbitrarily close to~$1$ and $m(f(I(z)))\ge 1/2$. For instance, if $f(z)=(z-r)^{n}/(1-rz)^{n}$, where $0<r<1$ and $n>2$, we have $|f(s)|=|(s-r)/(1-rs)|^{n}$, which could be arbitrarily close to~$1$ if $\rho (s,r)$ is sufficiently close to~$1$, while $f(I(s))=\partial\mathbb{D}$ if $s<r$. Part~(b) says that the converse of the estimate~(a) holds uniformly for iterates of a finite Blaschke product~$f$, if the constant $\gamma$ is allowed to depend on~$f$.
\end{remark}

\begin{proof}
(a) Let $\{z_{n}\}$ be the zeros of~$f$. We can assume that the mapping $f\colon I(z)\to f(I(z))$ is one to one since otherwise using \eqref{eq5} we would deduce that $f(I(z))=\partial \mathbb{D}$. We can also assume $\inf\{\rho (z,z_{n}):n\ge 1\}\ge 1/2$. The identity~\eqref{eq4} gives 
$$
m(f(I(z)))=\int_{I(z)} \sum_n P(z_{n},\xi)\,dm(\xi).
$$
Note that there exists a universal constant $c_1>0$ such that $|\xi-z_{n}|\le c_1 |1-z_{n}\bar z|$ for any $\xi\in I(z)$ and any~$n$. We deduce
\begin{equation}\label{eq6,1}
    m(f(I(z)))\ge c_1^{-2} \sum_{n}\frac{1-|z_{n}|^{2}}{|1-z_{n}\bar z|^{2}} (1-|z|).
\end{equation}
Since $\inf \{\rho (z,z_{n}):n\ge 1\}\ge 1/2$, the elementary estimate $-\log x\le c_{2}(1-x^{2})$, $1/2\le x\le 1$, provides a universal constant $c_{3}>0$ such that
\begin{equation}\label{eq6,2}
  \sum_{n}\frac{(1-|z_{n}|^{2})(1-|z|)}{|1-z_{n}\bar z|^{2}}\ge c_{3}\log |f(z)|^{-1}.  
\end{equation}
This finishes the proof of (a). We now prove (b). We first show that there exists a constant~$c_{4}=c_{4}(f, \delta)>1$ such that
\begin{equation}\label{eq7}
|(f^{N})'(\xi)|\le c_{4}|(f^{N})'(\xi^{*})|,\quad \xi,\xi^{*}\in I,\quad N\ge 1.
\end{equation}
Using that $|\mathrm{log} x-\log y| \le |x-y|$ for any $x,y>1$, we obtain
\begin{equation*}
\begin{split}
\left|\log \frac{|(f^{N})'(\xi)|}{|(f^{N})'(\xi^{*})|} \right|&=\left|\sum^{N-1}_{k=1}\log 
\frac{|f'(f^{k}(\xi))|}{|f'(f^{k}(\xi^{*}))|}\right|\\*[5pt]
&\le \sum^{N-1}_{k=1}|f'(f^{k}(\xi))-f'(f^{k}(\xi^{*}))|\le c_{5}\sum^{N-1}_{k=1}|f^{k}(\xi)-f^{k}(\xi^{*})|,
\end{split}
\end{equation*}
where $c_{5}=\max\{|f''(\xi)|: \xi\in \partial\mathbb{D}\}$. Since $m(f^{N}(I))=\delta <1$, Lemma~\ref{lem2.1} gives the estimate~\eqref{eq7}.

Let $\xi(I)$ be the center of~$I$. Applying \eqref{eq7}, for any measurable subset $J\subset I$, we have
$$
m(f^{N}(J))=\int_{J}|(f^{N})'|\,dm \ge c_{4}^{-1} |(f^{N})'(\xi(I))|m(J)\ge c_{4}^{-2} m(f^{N}(I))\frac{m(J)}{m(I)}=c_{4}^{-2} \delta \frac{m(J)}{m(I)}.
$$
We deduce that there exists a constant $c_{6}=c_{6}(\delta,f)>0$ such that
\begin{equation}\label{eq8}
\frac{1}{m(I)} \int_{I}|f^{N}-f^{N}(z(I))|^{2}\,dm\ge c_{6}.
\end{equation}
Since there exists a universal constant $c_{7}>0$ such that $P(z(I),\xi)\ge c_{7}m(I)^{-1}$ for any $\xi\in I$, we obtain 
\begin{equation*}
\begin{split}
\frac{1}{m(I)}\int_{I}|f^{N}-f^{N}(z(I))|^{2}\,dm&\le c_{7}^{-1} \int_{\partial\mathbb{D}} |f^{N}(\xi)-f^{N}(z(I))|^{2}P(z(I),\xi)\,dm(\xi)\\
&=c_{7}^{-1} (1-|f^{N}(z(I))|^{2}).
\end{split}
\end{equation*}
Using \eqref{eq8} we deduce $1-|f^{N}(z(I))|^{2}\ge c_{6}c_{7}$ and the proof is completed.
\end{proof}

Next auxiliary result will be used in Section 4 and it is not needed in the proofs of our main results. 

\begin{lemma}\label{lem4.0}
Let $f$ be a finite Blaschke product with $f(0)=0$ which is not a rotation. Fix $0< \delta < ( 10 \max \{|f'(\xi)| : \xi \in \partial \mathbb{D}\})^{-1}$. Then there exits a constant $C=C(\delta , f) >0$ such that the following statement holds. Given $z \in \mathbb{D}$ let $N(z)$ be the smallest positive integer such that $m(f^{N(z)}(I(z)))\ge \delta$. Then 
\begin{equation}\label{eq38}
\sum^{N(z)}_{k=1}|f^{k}(\xi)-f^{k}(z)|\le C, \quad \xi \in I(z). 
\end{equation}
\end{lemma}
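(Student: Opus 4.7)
The plan is to prove the pointwise estimate $|f^{k}(\xi) - f^{k}(z)| \leq C_{1}\, m(f^{k}(I(z)))$ for all $\xi \in I(z)$ and $1 \leq k \leq N(z)$, with $C_{1}$ depending only on $f$, and then sum against the geometric series furnished by Lemma~\ref{lem2.1}. Write $N = N(z)$, $J_{k} = f^{k}(I(z))$, and $M = \max\{|f'(\xi)| : \xi \in \partial\mathbb{D}\}$. By minimality of $N$ we have $m(J_{N-1}) < \delta$, so $m(J_{N}) \leq M\delta < 1/10$. Applying Lemma~\ref{lem2.1} to $I(z)$ and $N$ (with $\delta' = m(J_{N}) < 1$) yields $m(J_{k}) \leq m(J_{N}) K^{k-N}$ for $1 \leq k \leq N$; in particular $m(J_{k}) < 1$, so $f^{k}$ is injective on $I(z)$ and $J_{k}$ is a proper arc of $\partial\mathbb{D}$ of length $m(J_{k})$.

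To prove the pointwise estimate, fix $\xi \in I(z)$ and let $\xi_{0} = z/|z|$ be the center of $I(z)$. Split
\[
|f^{k}(\xi) - f^{k}(z)| \leq |f^{k}(\xi) - f^{k}(\xi_{0})| + |f^{k}(\xi_{0}) - f^{k}(z)|.
\]
The first term is at most $m(J_{k})$, since $f^{k}(\xi)$ and $f^{k}(\xi_{0})$ both lie on the arc $J_{k}$. For the second, apply the Julia--Carath\'eodory inequality to the finite Blaschke product $f^{k}\colon \mathbb{D}\to\mathbb{D}$ at the boundary point $\xi_{0}$, whose angular derivative coincides with $|(f^{k})'(\xi_{0})|$ by smoothness:
\[
\frac{|f^{k}(\xi_{0}) - f^{k}(z)|^{2}}{1 - |f^{k}(z)|^{2}} \leq |(f^{k})'(\xi_{0})| \cdot \frac{|\xi_{0} - z|^{2}}{1 - |z|^{2}}.
\]
Since $z = |z|\xi_{0}$ gives $|\xi_{0}-z|^{2}/(1-|z|^{2}) \leq 1-|z|$, and since $|f^{k}(\xi_{0})| = 1$ forces $1-|f^{k}(z)|^{2} \leq 2|f^{k}(\xi_{0})-f^{k}(z)|$, cancelling one factor yields
\[
|f^{k}(\xi_{0}) - f^{k}(z)| \leq 2|(f^{k})'(\xi_{0})|(1-|z|).
\]
Now invoke the distortion estimate~\eqref{eq7} from the proof of Lemma~\ref{lem2.4}(b): because $m(J_{k}) < 1$ uniformly in $k \leq N$, the constant $c_{4}$ in~\eqref{eq7} can be chosen to depend only on $f$, so $|(f^{k})'(\eta)| \leq c_{4}|(f^{k})'(\xi_{0})|$ for $\eta \in I(z)$. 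Integrating over $I(z)$ gives $|(f^{k})'(\xi_{0})|(1-|z|) \leq c_{4}\, m(J_{k})$. Combining, $|f^{k}(\xi) - f^{k}(z)| \leq (1+2c_{4})\, m(J_{k})$.

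Summing over $k$ and using $m(J_{k}) \leq m(J_{N}) K^{k-N} \leq M\delta K^{k-N}$ gives
\[
\sum_{k=1}^{N} |f^{k}(\xi) - f^{k}(z)| \leq (1+2c_{4}) M\delta \sum_{k=1}^{N} K^{k-N} \leq (1+2c_{4}) M\delta \frac{K}{K-1},
\]
a constant $C = C(\delta,f)$. The main technical obstacle is transferring from the boundary to the interior: the Julia--Carath\'eodory inequality is exactly the tool that converts the boundary derivative $|(f^{k})'(\xi_{0})|$ into control on the displacement $|f^{k}(\xi_{0})-f^{k}(z)|$ at the interior point $z$; coupling it with the distortion~\eqref{eq7} then ties everything back to the natural length scale $m(J_{k})$, at which point the geometric decay from Lemma~\ref{lem2.1} finishes the job.
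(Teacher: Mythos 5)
Your proof is essentially correct and takes a genuinely different route from the paper's. The paper argues from the interior: it first pins down $|f^{N(z)}(z)|$ between two absolute constants via a Poisson-kernel lower bound, then invokes Lemma~2.1(a) of~\cite{N} to get the exponential decay $1-|f^{k}(z)|^{2}\le c_{5}b^{N(z)-k}$, and finally converts this into a pointwise bound on $|f^{k}(\xi)-f^{k}(z)|$ through the identity $\int_{\partial\mathbb{D}}|f^{k}-f^{k}(z)|^{2}P(z,\cdot)\,dm = 1-|f^{k}(z)|^{2}$ together with Cauchy--Schwarz. You work from the boundary inward: you bound the displacement $|f^{k}(\xi_{0})-f^{k}(z)|$ by a single application of Julia's inequality, then tie $|(f^{k})'(\xi_{0})|(1-|z|)$ to $m(f^{k}(I(z)))$ via the distortion estimate~\eqref{eq7} and one integration, and sum a geometric series. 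Your approach is more self-contained --- it avoids the external reference to~\cite{N} and does not need the two-sided estimate~\eqref{eq36} on $|f^{N(z)}(z)|$ --- at the cost of importing Julia--Carath\'eodory, which the paper does not use.

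Two small points you should patch. First, your opening inference ``$m(J_{N-1})<\delta$ by minimality'' is vacuous when $N(z)=1$: minimality then says nothing about $m(J_{0})=m(I(z))$, so the bound $m(J_{N})\le M\delta$ can fail. Dispose of this case by noting that for $N(z)=1$ the left-hand side of~\eqref{eq38} is a single term bounded by~$2$. Second, you assert that the distortion constant $c_{4}$ from~\eqref{eq7} applies uniformly to all $f^{k}$ with $k\le N$. This is true, but deserves a sentence: the paper derives~\eqref{eq7} for the top exponent $N$, whereas you need it for every $k\le N$. One line in the chain resolves it --- in the estimate $\bigl|\log\frac{|(f^{k})'(\eta)|}{|(f^{k})'(\xi_{0})|}\bigr|\le c_{5}\sum_{j=1}^{k-1}|f^{j}(\eta)-f^{j}(\xi_{0})|$ the right-hand side is dominated by $\sum_{j=1}^{N-1}|f^{j}(\eta)-f^{j}(\xi_{0})|$, which Lemma~\ref{lem2.1} controls by $2\pi m(J_{N})/(K-1)$, a bound independent of both $k$ and $N$. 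With these two remarks added, the argument is complete and correct.
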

\begin{proof}
Let $K_1=\max\{|f'(\xi)|\, :\, \xi\in\partial\mathbb{D}\}$. Note that $N(z)\to\infty$ as $|z|\to 1$ and $m(f^{N(z)}(I(z)))\le \delta K_{1}$.~Part~(b) of Lemma~\ref{lem2.4} gives a constant $0<\gamma=\gamma(\delta,f)<1$ such that $|f^{N(z)}(z)|\le \gamma$.~Note that $f^{N(z)}$ can not have zeros at pseudohyperbolic distance less than~$1/2$ of~$z$, because in this case $m(f^{N(z)}(I(z)))$ would be large. Then, estimates \eqref{eq6,1} and \eqref{eq6,2} in the proof of part~(a) of Lemma~\ref{lem2.4} give that $\delta K_{1}\ge m(f^{N(z)}(I(z)))\ge c_{1}^{-2}c_{3}\log |f^{N(z)}(z)|^{-1}$. Hence
\begin{equation}\label{eq36}
e^{-\delta K_{1}c_{1}^{2}/c_{3}}\le |f^{N(z)}(z)|\le\gamma.
\end{equation}
Pommerenke estimates of the Denjoy--Wolff Theorem (Lemma~2 of \cite{P}) provide constants $C(\gamma) >0$ and  $0<a=a(f)<1$ such that
\begin{equation}\label{eq37}
|f^{n}(z)|\le C(\gamma) a^{n-N(z)},\quad n\ge N(z).
\end{equation}
Let $K=\min\{|f'(\xi)|:\xi\in\partial\mathbb{D}\}$.~Since $f(0)=0$ and $f$ is not a rotation, identity \eqref{eq4} gives $K>1$. Lemma~\ref{lem2.1} gives 
$$
|f^{k}(\xi)-f^{k}(\xi')|\le 2\pi\delta K_{1}K^{k-N(z)},\quad \xi,\xi'\in I(z),\quad 0 <  k\le N(z).
$$
Hence $|f^{k}(\xi)-f^{k}(z)|\le |f^{k}(\xi')-f^{k}(z)|+2\pi\delta K_{1}K^{k-N(z)}$, for any $\xi, \xi' \in I(z)$ and $0< k \leq N(z)$. Integrating over $\xi'\in I(z)$ and using that there exists a constant $c_4 >0$ such that  $P(z, \xi) \geq c_4 / (1-|z|), \xi \in I(z)$, we deduce
\begin{equation*}
\begin{split}
|f^{k}(\xi)-f^{k}(z)|&\le (1-|z|)^{-1 }\int_{I(z)} |f^{k}(\xi')-f^{k}(z)| \,dm(\xi')+2\pi\delta K_{1}K^{k-N(z)}\\
&\le c_4^{-1} \left(\int_{\partial\mathbb{D}}|f^{k}(\xi')-f^{k}(z)|^{2} P (z , \xi')\,dm (\xi')\right)^{1/2}+2\pi\delta K_1 K^{k-N(z)}\\
&=c_4^{-1} (1-|f^{k}(z)|^{2})^{1/2}+2\pi\delta K_{1}K^{k-N(z)},\; \xi \in I(z), \; 0 <  k\leq N(z).
\end{split}
\end{equation*}
Using Lemma~2.1(a) of \cite{N} and \eqref{eq36} one finds  constants $c_5 = c_5 (\delta) >0$ and  $0<b = b(f)<1$ such that $1-|f^{k}(z)|^{2}\le c_5 b^{N(z)-k}$, if $0 <  k\leq N(z)$. Hence there exists a constant $C=C(\delta, f)>0$ such that
\begin{equation}\label{eq38}
\sum^{N(z)}_{k=1}|f^{k}(\xi)-f^{k}(z)|\le C, \quad \xi \in I(z). 
\end{equation}
\end{proof}

Next auxiliary result is Lemma~3.3 of \cite{N}.

\begin{lemma}\label{lem2.6}
Let $f$ be an inner function with $f(0)=0$ which is not a rotation.~Then there exist two constants $0<\varepsilon=\varepsilon(f)<1$ and $0<c=c(f)<1$ such that the following statement holds. Let $M<N$ be positive integers, $z\in\mathbb{D}$ satisfying $|f^{M}(z)|<\varepsilon$ and $\{a_{n}:M\le n\le N\}$ a collection of complex numbers. Then there exists
a set $E=E(z,\{a_{n}\})\subset \partial\mathbb{D}$ with $E\subset c^{-1}I(z)$ such that
$$
\operatorname{Re} \left(\sum^{N}_{n=M}a_{n}f^{n}(\xi)\right)\ge c\left(\sum^{N}_{n=M}|a_{n}|^{2}\right)^{1/2},\quad \xi\in E.
$$
\end{lemma}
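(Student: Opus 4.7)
My plan is to construct inductively a nested sequence of arcs in $c^{-1}I(z)$ on which the partial sums of $F(\xi) = \sum_{n=M}^{N} a_{n} f^{n}(\xi)$ have prescribed phase alignment, exploiting the expanding character of $f$ as a self-map of $\partial\mathbb{D}$: whenever an arc has $f^{k}$-image of measure exceeding $1/\deg f$, the next iterate $f^{k+1}$ covers all of $\partial\mathbb{D}$ on that arc, so preimages with prescribed image are available.

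I would fix $d = \deg f \ge 2$, pick $\delta_{0}\in(1/d,\,1/2)$ (possible when $d \ge 3$), and choose $\varepsilon = \varepsilon(f)$ small enough that Lemma~\ref{lem2.4}(a) guarantees $m(f^{M}(I(z))) \ge \delta_{0}$ whenever $|f^{M}(z)| < \varepsilon$. Then I construct inductively arcs $J_{M} \supset J_{M+1} \supset \cdots \supset J_{N}$, all contained in $c^{-1}I(z)$, such that at each stage $k$ one has (i) $m(f^{k}(J_{k})) = \delta_{0}$ and (ii) $f^{k}(J_{k})$ is the arc of measure $\delta_{0}$ centered at $\overline{a_{k}}/|a_{k}|$. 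Property~(i) at stage $k-1$ forces $m(f^{k}(J_{k-1})) = m(f(f^{k-1}(J_{k-1}))) = 1$ since $\delta_{0} > 1/d$ and $f$ has degree $d$, so the prescribed target arc at stage $k$ has a preimage $J_{k} \subset J_{k-1}$, and property~(ii) then yields $\operatorname{Re}(a_{k} f^{k}(\xi)) \ge |a_{k}|\cos(\pi\delta_{0})$ on $J_{k}$. Summing over $k=M,\dots,N$ gives
\[
\operatorname{Re}F(\xi) \;\ge\; \cos(\pi\delta_{0})\sum_{k=M}^{N}|a_{k}| \;\ge\; \cos(\pi\delta_{0})\left(\sum_{k=M}^{N}|a_{k}|^{2}\right)^{1/2},\quad \xi \in J_{N},
\]
so $E = J_{N}$ satisfies the conclusion with $c = \cos(\pi\delta_{0})$.

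The main obstacle is the minimal-degree case $d = 2$, where no $\delta_{0}$ satisfies both $\delta_{0} > 1/d$ and $\delta_{0} < 1/2$, and the phase-alignment loop degenerates ($\cos(\pi\delta_{0})\to 0$). To handle this, I would either pass to $f^{2}$ (of degree $4$) and run the above construction on the even- and odd-indexed sub-sequences of $k$ separately, or augment the inductive scheme with a second-moment estimate. For the latter, the pushforward identity $(f^{M})_{\ast}(P(z,\cdot)\,dm) = P(f^{M}(z),\cdot)\,dm$ produces
\[
\int f^{n}\,\overline{f^{m}}\,P(z,\cdot)\,dm \;=\; \overline{f^{m}(z)/f^{n}(z)},\quad M \le n \le m;
\]
combined with Schwarz--Pick applied to the Blaschke product $B_{k}(w) = f^{k}(w)/w$ (noting $|B_{k}(0)| = |f'(0)|^{k} < 1$) and Pommerenke's exponential decay of $|f^{n}(z)|$ for $n \ge M$, the off-diagonal contributions are controlled, yielding $\int |F|^{2}P(z,\cdot)\,dm \gtrsim \sum |a_{n}|^{2}$. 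Combined with Corollary~\ref{coro2.2} to control oscillation on sub-arcs and a pigeonhole over four phase quadrants, this extracts a point in $c^{-1}I(z)$ at which $\operatorname{Re}F$ has the required size.
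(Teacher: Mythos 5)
The central step of your first construction is false: you claim that if an arc $A \subset \partial\mathbb{D}$ has $m(A) = \delta_{0} > 1/d$, where $d = \deg f$, then $f(A) = \partial\mathbb{D}$. This fails for general finite Blaschke products. On the circle one has $|f'(\xi)| = \sum_{n} P(z_{n},\xi)$, a sum of Poisson kernels, which is concentrated near the boundary points approached by the zeros $z_{n}$. If the zeros cluster near a single point $\xi_{0}\in\partial\mathbb{D}$, then $|f'| \approx 1$ away from $\xi_{0}$, so an arc $A$ of measure $1/d + \epsilon$ that avoids a neighbourhood of $\xi_{0}$ satisfies $m(f(A)) = \int_{A}|f'|\,dm \approx m(A) < 1$, hence $f(A) \neq \partial\mathbb{D}$. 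Concretely, $f(z) = z\prod_{j}\frac{z - r_{j}}{1-r_{j}z}$ with all $r_{j}$ close to $1$ gives a counterexample for any degree. This is precisely the obstruction the paper emphasizes when it notes, before the proof of Theorem~\ref{theo1.3}, that \emph{``there are finite Blaschke products $f$ which do not satisfy \eqref{eq10} and the proof requires different ideas.''} So the degree alone never gives the ``covers the whole circle'' property, and your nested-arc scheme collapses at the first inductive step. The restriction to $d=2$ you flag is not the real issue; the construction fails for every $d$.

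The $L^{2}$ fallback is the right structural idea but the estimate you sketch does not close. With $n<m$ the pushforward identity gives $\bigl|\int f^{n}\overline{f^{m}}\,P(z,\cdot)\,dm\bigr| = |f^{m}(z)/f^{n}(z)|$, and writing $f^{m}(z)/f^{n}(z)$ as a product of $B_{1}(f^{j}(z))$, $n\le j<m$, Schwarz--Pick plus the decay of $|f^{j}(z)|$ gives $|f^{m}(z)/f^{n}(z)| \lesssim |f'(0)|^{m-n}$ (up to constants depending on $\varepsilon$). The resulting off-diagonal contribution to $\int|F|^{2}P(z,\cdot)\,dm$ is then bounded by $\tfrac{2|f'(0)|}{1-|f'(0)|}\sum|a_{n}|^{2}$, which already dominates the diagonal contribution $\sum|a_{n}|^{2}$ as soon as $|f'(0)| \geq 1/3$. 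So for general $f$ the simple Gram-matrix / Gershgorin estimate does not give the positivity $\int|F|^{2}P(z,\cdot)\,dm \gtrsim \sum|a_{n}|^{2}$; one would at minimum need to pass to a high iterate $f^{p}$ with $|(f^{p})'(0)|$ small and carefully recombine the residue classes, and you do not carry out that recombination. Moreover, even granting the $L^{2}$ lower bound, the passage to a pointwise lower bound on $\operatorname{Re} F$ at a point of $c^{-1}I(z)$ with the correct sign is the delicate part of the lemma, and ``pigeonhole over four phase quadrants'' does not by itself deliver a point $\xi$ at which the real part (not just the modulus) exceeds a universal multiple of $\|a\|_{2}$, after subtracting the oscillation controlled by Corollary~\ref{coro2.2}. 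In short: you have assembled the right raw materials (expansion on $\partial\mathbb{D}$, Lemma~\ref{lem2.4}, Corollary~\ref{coro2.2}, Poisson testing, Pommerenke decay), but the two arguments as written each have a genuine gap at their core.
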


Our last auxiliary result is elementary and it is stated for future reference.

\begin{lemma}\label{lem2.7}
Let $\{b_{n}\}$ be a sequence of positive numbers with $\sum b_{n}<\infty$. Consider $S_n= \sum\limits_{k>n}b_{k}$. Assume
$$
\lim_{n\to\infty}\frac{b_{n}}{S_n}=0.
$$
Fix $K>1$. Then 
$$
\lim_{N \to \infty} \frac{\sum_{n=1}^{N}b_{n}K^{n-N}}{S_N} =0
$$
\end{lemma}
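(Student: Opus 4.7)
The plan is to exploit the fact that $b_n/S_n \to 0$ forces $S_n$ to decay slowly, specifically like a geometric sequence with ratio arbitrarily close to $1$, which we can then beat against the factor $K^{n-N}$ since $K>1$.

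First I would translate the hypothesis into a control on consecutive ratios of $S_n$. Since $S_{n-1}-S_n = b_n$, the assumption $b_n/S_n \to 0$ gives $S_{n-1}/S_n = 1 + b_n/S_n \to 1$. Given $\eta>0$, I would first pick $\varepsilon>0$ so small that $r := (1+\varepsilon)/K < 1$ and $\varepsilon/(1-r) < \eta/2$; this is possible because $K>1$. Then there exists $n_0$ such that $b_n \le \varepsilon S_n$ and $S_{n-1} \le (1+\varepsilon)S_n$ for all $n\ge n_0$. Iterating the second inequality yields the key growth estimate
\begin{equation*}
S_n \le (1+\varepsilon)^{N-n}\, S_N, \qquad n_0 \le n \le N.
\end{equation*}

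Next I would split the sum at $n_0$. For the tail $n_0 \le n \le N$ the two bounds above give
\begin{equation*}
\sum_{n=n_0}^{N} b_n K^{n-N} \le \varepsilon \sum_{n=n_0}^{N} S_n K^{n-N} \le \varepsilon S_N \sum_{n=n_0}^{N} \left(\frac{1+\varepsilon}{K}\right)^{N-n} \le \frac{\varepsilon}{1-r}\, S_N < \frac{\eta}{2}\, S_N.
\end{equation*}
For the head $1 \le n < n_0$, write $C = \sum_{n\ge 1} b_n < \infty$ so $\sum_{n<n_0} b_n K^{n-N} \le C K^{n_0-N}$. The same growth estimate, applied at $n=n_0$, gives $S_{n_0} \le (1+\varepsilon)^{N-n_0} S_N$, hence
\begin{equation*}
\frac{C K^{n_0-N}}{S_N} \le \frac{C}{S_{n_0}}\, r^{N-n_0} \xrightarrow[N\to\infty]{} 0.
\end{equation*}
Combining the two pieces, $\limsup_{N\to\infty} \sum_{n=1}^N b_n K^{n-N}/S_N \le \eta/2 < \eta$; since $\eta>0$ was arbitrary, the limit is zero.

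I do not foresee a real obstacle here: the only substantive step is converting $b_n/S_n \to 0$ into the bound $S_n/S_N \le (1+\varepsilon)^{N-n}$, and once one notices that $1+\varepsilon$ can be made smaller than $K$, the geometric tail $\sum r^{N-n}$ handles everything. The only mild care is in the head term, where one must use that $S_N$ itself cannot decay faster than $(1+\varepsilon)^{-(N-n_0)}$, which is exactly the same estimate used in the opposite direction.
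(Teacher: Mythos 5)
Your proof is correct and follows essentially the same route as the paper's: both convert $b_n/S_n\to 0$ into the geometric bound $S_n\le(1+\varepsilon)^{N-n}S_N$ for $n\ge n_0$, use a geometric series with ratio $(1+\varepsilon)/K<1$ to control the tail, and use the same lower bound on $S_N$ (the paper phrases it as $K^NS_N\to\infty$) to kill the head term $\sum_{n<n_0}b_nK^{n-N}$. Your writeup is somewhat more explicit about choosing $\varepsilon$ in terms of a prescribed tolerance $\eta$, but there is no substantive difference.
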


\begin{proof}
Fix $\varepsilon >0$ such that $K(1+ \varepsilon)^{-1} >1$. By assumption there exists $n_0 = n_0 (\varepsilon) >0$ such that $S_n \geq (1+ \varepsilon)^{-1} S_{n-1} $ for any $n \geq n_0$. Hence $S_n \geq (1+ \varepsilon)^{-n + n_0} S_{n_0}$, $n \geq n_0$. We deduce that 
\begin{equation}\label{101}
    \lim_{n \to \infty} K^n S_n = \infty .
\end{equation}
Since $b_n \leq \varepsilon S_n$ for $n \geq  n_0$ and $S_n \leq (1 + \varepsilon)^{N - n} S_N$ for $n_0 \leq n \leq N$, we have 
$$
\sum^{N}_{n=n_0}b_{n}K^{n-N} \leq \varepsilon S_N \sum^{N}_{n=n_0} \left(\frac{K}{1+ \varepsilon} \right)^{n-N}.
$$
Now the identity \eqref{101} finishes the proof.  
\end{proof}

\section{Proofs of the main results}\label{sec3}

This section is devoted to the proofs of Theorems~\ref{theo1.1}, \ref{theo1.2} and \ref{theo1.3}. It is worth mentioning that under additional assumptions on the function~$f$, one can give a short proof of Theorem~\ref{theo1.3}. Actually, let $f$ be a finite Blaschke product and assume that there exists a constant $\varepsilon>0$ such that
\begin{equation}\label{eq10}
f(I)=\partial\mathbb{D}
\end{equation}
for any arc $I\subset \partial\mathbb{D}$ with $m(I)\ge 1/2-\varepsilon$. For instance, according to \eqref{eq5}, any function $f$ of the form $f(z)=z^{2}g(z)$, where $g$ is a non-constant Blaschke product, satisfies \eqref{eq10}. Let $\{a_{n}\}$  be a sequence of complex numbers.~For any $n$  such that $a_{n}\ne 0$, consider the closed arc~$I_{n}$ centered at $\overline{a}_{n}/|a_{n}|$ with $m(I_{n})=1/2-\varepsilon$. Note that there exists a constant $c=c(\varepsilon)>0$ such that
\begin{equation}\label{eq11}
\operatorname{Re}\left(\sum_{n}a_{n}\xi_{n}\right)\ge c\sum_{n}|a_{n}|,
\end{equation}
for any choice of points $\xi_{n}\in I_{n}$, $n\ge 1$. Take $J_1 = I_1$.~We now construct inductively a sequence of nested closed arcs~$J_{n}$ such that $f^{n-1}(J_{n})=I_{n}$, $n>1$. Since $f(I_{1})=\partial\mathbb{D}$, there exists an arc~$J_{2}\subset I_{1}$ with $f(J_{2})=I_{2}$. Assume $J_{2},\dotsc,J_{n-1}$ have been constructed satisfying $f^{n-2} (J_{n-1})=I_{n-1}$. Then \eqref{eq10} gives $f^{n-1}(J_{n-1})=f(I_{n-1})=\partial\mathbb{D}$ and we can find an arc~$J_{n}\subset J_{n-1}$ such that $f^{n-1}(J_{n})=I_{n}$. This finishes the construction of the nested sequence of arcs~$\{J_{n}\}$. Pick $\xi\in \partial\mathbb{D}$ such that $f(\xi)\in \bigcap\limits_{n}J_{n}$. Since $f^{n}(\xi)\in I_{n}$ for any $n\ge 1$, estimate~\eqref{eq11} gives 
$$
\operatorname{Re}\left(\sum_{n}a_{n}f^{n}(\xi)\right)\ge c\sum_{n}|a_{n}|.
$$
This finishes the proof of Theorem~\ref{theo1.3} under the additional assumption~\eqref{eq10}.~However there are finite Blaschke products~$f$ which do not satisfy \eqref{eq10} and the proof of Theorem~\ref{theo1.3} requires different ideas.

\begin{proof}[Proof of Theorem~\ref{theo1.3}]

The proof of Theorem~\ref{theo1.3} is organized in three steps.

\vspace*{7pt}
\noindent
\emph{1.\ Splitting.} We use an idea of M.~Weiss (\cite{W}). Let $T'<T$ be two (large) positive integers to be fixed later such that $T/T'$ is also an integer. Since one can obviously add terms with vanishing coefficients to the left-hand side term of (\ref{eq3.5}), one can assume that $N-M+1$ is a multiple of $T$. Split the sum $F=\sum\limits_{M}^{N}a_{n}f^{n}$ into blocks of length~$T$, that is, $F=\sum\limits_{k\ge 0}G_{k}$, where
$$
G_{k}=\sum_{n=0}^{T-1} a_{M+kT+n} f^{M+kT+n}, \quad k=0,1,2,\dotsc
$$
Next split $G_{2k}$, $k\ge 1$, into successive blocks of length~$T'$ and pick the subblock such that the sum of the modulus of the coefficients is the least, that is, pick $\mathcal{S}_{k}$ a subset of $T'$ consecutive integers in~$[M+2kT, M+(2k+1)T)$ of the form $\{M+ 2kT + jT' , \ldots , M+2kT + (j+1)T' -1 \}$ such that
$$
\sum_{n\in \mathcal{S}_{k}}|a_{n}| \le \sum_{\ell\in\mathcal{S}}|a_{\ell}|,
$$
for any other subset~$\mathcal{S}$ of $T'$ consecutive integers in $[M+2kT,M+(2k+1)T)$ of the same form. Since the number of disjoint subblocks of this type is $T/T'$ we deduce
\begin{equation}\label{eq12}
\sum_{n\in \mathcal{S}_{k}}|a_{n}|\le \frac{T'}{T}\sum^{T-1}_{n=0} |a_{M+2kT+n}|,\quad k\ge 1.
\end{equation}
The corresponding block
$$
{S}_{k}=\sum_{n\in \mathcal{S}_{k}} a_{n}f^{n},\quad k\ge 1
$$
will be called a short block. The long blocks are defined as the blocks between two consecutive short blocks as well as the first and last block. More concretely if $\mathcal{S}_{k}=\{n\in\mathbb{Z}: N_{k}<n<M_{k+1}\}$, $k\ge 1$, then
$$
L_{1}=\sum^{N_{1}}_{n=M} a_{n} f^{n},\quad L_{k}=\sum^{N_{k}}_{n=M_{k}}a_{n}f^{n},\quad k>1,
$$
and
$$
S_{k}=\sum^{M_{k+1}-1}_{n=N_{k}+1} a_{n}f^{n},\quad k\ge 1.
$$
Write $M_1 = M$ and $\mathcal{L}_{k}=\{n\in\mathbb{Z}: M_{k}\le n\le N_{k}\}$, $k \geq 1$. Note that for any $k\ge 1$, the set $\mathcal{S}_{k}$ has $T'$~indexes while the number of indices in $\mathcal{L}_{k}$ is between $T$ and $3T$. Note also that \eqref{eq12} gives
\begin{equation}\label{eq13}
\sum_{k}\sum_{n\in \mathcal{L}_{k}} |a_{n}| \ge \left(1-\frac{T'}{T}\right)\sum^{N}_{n=M}|a_{n}|.
\end{equation}

\vspace*{7pt}
\noindent
\emph{2.\ The inductive construction.}
Let $0 < \varepsilon =  \varepsilon (f)< 1$ and $0 < c =  c (f)< 1$ be the constants appearing in Lemma  \ref{lem2.6}.
We will show that there exist constants $c_{0}=c_{0}(f)>0$ and $0<\gamma=\gamma(f)<1$ and a sequence of nested closed arcs~$I_{k}\subset c^{-1} I(z)$, such that
\begin{align}
|f^{N_{k}}(z(I_{k}))| &\le \gamma,\label{eq14}\\
\operatorname{Re} \sum^{k}_{j=1} L_{j}(\xi) &\ge \frac{c_{0}}{T^{1/2}}\sum^{k}_{j=1} \sum_{n\in\mathcal{L}_{j}}|a_{n}|,\quad \xi\in I_{k}.\label{eq15}
\end{align}
We argue by induction. Since $|f^{M}(z)|<\varepsilon$, Lemma~\ref{lem2.6} provides a point $\xi_{1}\in c^{-1}I(z)$ with
\begin{equation}\label{eq16}
\operatorname{Re} L_{1}(\xi_{1})\ge c\left(\sum^{N_{1}}_{n=M} |a_{n}|^{2}\right)^{1/2}.
\end{equation}
Note that $|f^{N_{1}}(z)|\le |f^{M}(z)|< \varepsilon$. By part~(a) of Lemma~\ref{lem2.4}, there exists a constant $\delta_{0}=\delta_{0}(\varepsilon)>0$ such that $m(f^{N_{1}}(I(z))) \ge \delta_{0}$. Fix $0<\delta_{1}<\delta_{0}/2$ and pick an arc~$I_{1}$ with $\xi_{1}\in I_{1}\subset c^{-1}I(z)$ with $m(f^{N_{1}}(I_{1}))=\delta_{1}$. If $\delta_{1}>0$ is chosen sufficiently small, Corollary~\ref{coro2.2} and \eqref{eq16} give
$$
\operatorname{Re} L_{1}(\xi)\ge \frac{c}{2} \left(\sum_{n=M}^{N_1}|a_{n}|^{2}\right)^{1/2},\quad \xi\in I_{1}.
$$
Since $\mathcal{L}_{1}$ has at most $3T$~indexes, Cauchy--Schwarz's inequality gives 
$$
\sum_{n\in\mathcal{L}_{1}}|a_{n}| \le (3T)^{1/2}\left(\sum_{n\in\mathcal{L}_{1}}|a_{n}|^{2}\right)^{1/2}
$$
and \eqref{eq15} holds for $k=1$ if we pick $0<c_{0}<c/2\sqrt{3}$.~Since $m(f^{N_{1}}(I_{1}))=\delta_{1}$, part~(b) of Lemma~\ref{lem2.4} provides a constant $\gamma=\gamma (\delta_{1},f)<1$ such that the estimate~\eqref{eq14} holds for $k=1$. Assume now that the arcs $I_{k}\subset I_{k-1}\subset\dotsb \subset I_{1}$ have been constructed so that \eqref{eq14} and \eqref{eq15} hold. Next we will construct $I_{k+1}$. Fix a constant $0<c_{1}=c_{1}(c)<1$ such that there exists a point $z_{k}^{*}\in\mathbb{D}$ with $\rho(z_{k}^{*}, z(I_{k}))\le c_{1}$ with $c^{-1}I(z_{k}^{*})\subset I_{k}$. Since $|f^{N_{k}}(z(I_{k}))|\le\gamma$, Schwarz's Lemma gives a constant $0<\gamma_{1}=\gamma_{1}(\gamma,c_{1})<1$ with $|f^{N_{k}}(z_{k}^{*})|\le\gamma_{1}$. Recall that $M_{k+1}=N_{k}+T'$. Since $f^{n}\to 0$ uniformly on compacts of~$\mathbb{D}$, we can choose a positive integer $T' = T' (f)$ sufficiently large but independent of $k$, so that
$$
|f^{M_{k+1}}(z_{k}^{*})|\le \varepsilon.
$$
Apply Lemma~\ref{lem2.6} to find a point $\xi_{k}^{*}\in c^{-1}I(z_{k}^{*})\subset I_{k}$ such that
\begin{equation}\label{eq17}
\operatorname{Re} (L_{k+1}(\xi_{k}^{*}))\ge c\left(\sum_{n\in\mathcal{L}_{k+1}}|a_{n}|^{2}\right)^{1/2}.
\end{equation}
Note that since $f(0)=0$, Schwarz's lemma gives $|f^{N_{k+1}}(z_{k}^{*})|\le |f^{M_{k+1}}(z_{k}^{*})|\le\varepsilon$. By part~(a) of Lemma~\ref{lem2.4}, there exists a constant~$\delta_{0}=\delta_{0}(\varepsilon)>0$ such that $m(f^{N_{k+1}}(I(z_{k}^{*})))\ge \delta_{0}$.~Fix $0<\delta_{1}<\delta_{0}/2$ and pick an arc~$I_{k+1}$ with $\xi_{k}^{*}\in I_{k+1}\subset I_{k}$ such that $m(f^{N_{k+1}}(I_{k+1}))=\delta_{1}$. If $\delta_{1}>0$ is chosen sufficiently small, Corollary~\ref{coro2.2} and estimate~\eqref{eq17} give
$$
\operatorname{Re} (L_{k+1}(\xi))\ge \frac{c}{2} \left(\sum_{n\in \mathcal{L}_{k+1}}|a_{n}|^{2}\right)^{1/2},\quad \xi\in I_{k+1}.
$$
Since $\mathcal{L}_{k+1}$ has at most $3T$~indexes, Cauchy--Schwarz's inequality gives
$$
\sum_{n\in\mathcal{L}_{k+1}}|a_{n}|\le (3T)^{1/2}\left( \sum_{n\in\mathcal{L}_{k+1}}|a_{n}|^{2}\right)^{1/2}
$$
and we deduce
$$
\operatorname{Re}L_{k+1}(\xi)\ge \frac{c}{2}\frac{1}{(3T)^{1/2}}\sum_{n\in \mathcal{L}_{k+1}}|a_{n}|,\quad \xi\in I_{k+1}.
$$
If we pick $0<c_{0}<c/2\sqrt{3}$, the inductive assumption gives
$$
\operatorname{Re} \sum^{k+1}_{j=1}L_{j} (\xi)\ge \frac{c_{0}}{T^{1/2}}\sum^{k+1}_{j=1} \sum_{n\in\mathcal{L}_{j}}|a_{n}|,\quad \xi\in I_{k+1}
$$
which is \eqref{eq15} for the index~$k+1$. Since $m(f^{N_{k+1}} (I_{k+1}))=\delta_{1}$, the estimate~\eqref{eq14} for the index~$k+1$, follows from part~(b) of Lemma~\ref{lem2.4}. This finishes the proof of the existence of the nested sequence of arcs~$I_{k}$ satisfying \eqref{eq14} and \eqref{eq15}.

\vspace*{7pt}
\noindent
\emph{3.\ The final argument.}
Pick $\xi\in \bigcap\limits_{k} I_{k}$. The estimate~\eqref{eq15} gives
$$
\operatorname{Re} \left(\sum^{k}_{j=1}L_{j}(\xi) +\sum^{k-1}_{j=1}S_{j}(\xi)\right)\ge \frac{c_{0}}{T^{1/2}} \sum^{k}_{j=1}\sum_{n\in\mathcal L_{j}}|a_{n}|-\sum^{k-1}_{j=1}\sum_{n\in\mathcal{S}_{j}}|a_{n}|,\quad k > 1.
$$
Since $\sum_{n=M}^N a_{n}f^{n}$ is a sum of long and short blocks, the estimates~\eqref{eq12} and \eqref{eq13} give
$$
\operatorname{Re}\left(\sum_{n=M}^N a_{n}f^{n}(\xi)\right) \ge \frac{c_{0}}{T^{1/2}}\left(1-\frac{T'}{T}\right) \sum_{n=M}^N|a_{n}|-\frac{T'}{T}\sum_{n=M}^N|a_{n}|.
$$
Choosing $T$ such that $T'/T^{1/2}< c_{0}/4$, we deduce
$$
\operatorname{Re}\left(\sum_{n=M}^N a_{n}f^{n}(\xi)\right) \ge \frac{c_{0}}{2T^{1/2}}\sum_{n=M}^N|a_{n}|.
$$
This finishes the proof.
\end{proof}

The proof of Theorem~\ref{theo1.1} uses the following easy consequence of Theorem~\ref{theo1.3}. 

\begin{corollary}\label{coro3.1}
Let $f$ be a finite Blaschke product with $f(0)=0$ which is not a rotation. Then there exists a constant $0<\eta=\eta(f)<1$ such that the following statement holds.~Let $M<N$ be positive integers, let $z\in\mathbb{D}$ with $|f^{M}(z)|\le \eta$, let $\{a_{n}:M\le n\le N\}$ be a collection of complex numbers and let $w\in\mathbb{C}$ with
\begin{equation}\label{eq17.5}
\sum^{N}_{n=M} |a_{n}| \le \eta |w|.
\end{equation}
Then there exists a point $\xi\in \eta^{-1}I(z)$ such that
$$
\left| w-\sum^{N}_{n=M}a_{n}f^{n}(\xi)\right|\le |w|-\eta \sum^{N}_{n=M}|a_{n}|.
$$
\end{corollary}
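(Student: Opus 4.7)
I would deduce Corollary~\ref{coro3.1} from Theorem~\ref{theo1.3} by a rotation normalization followed by an elementary computation. Writing $w = |w|e^{i\alpha}$ and replacing each $a_n$ by $a_n e^{-i\alpha}$ leaves both the hypothesis and the conclusion invariant, so I may assume $w = |w| > 0$ (the case $w = 0$ forces $A := \sum_{n=M}^N |a_n| = 0$ via \eqref{eq17.5} and is vacuous). Taking $\eta \le \varepsilon$, where $\varepsilon = \varepsilon(f)$ is the constant from Theorem~\ref{theo1.3}, that theorem produces a point $\xi \in \partial\mathbb{D}$ with $|\xi - z| \le c^{-1}(1-|z|)$ and $\operatorname{Re} S \ge cA$, where $S = \sum_{n=M}^N a_n f^n(\xi)$.

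Combining $\operatorname{Re} S \ge cA$ with the trivial bound $|S| \le A$ gives
$$
|w - S|^2 = w^2 - 2w\operatorname{Re} S + |S|^2 \le w^2 - 2cAw + A^2.
$$
The target inequality $|w - S| \le w - \eta A$, whose right-hand side is non-negative because $\eta A \le \eta^2 w < w$, is therefore implied by $w^2 - 2cAw + A^2 \le (w - \eta A)^2$, equivalently $2(c - \eta) w \ge (1 - \eta^2) A$. Invoking the hypothesis $A \le \eta w$, it suffices to require $2(c - \eta) \ge \eta(1 - \eta^2)$, which holds for any $\eta \le c/2$.

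It remains to locate $\xi$ inside $\eta^{-1} I(z)$. Letting $z^* = z/|z|$ be the center of $I(z)$, one has $|\xi - z^*| \le |\xi - z| + (1-|z|) \le (1 + c^{-1})(1-|z|)$. Since Euclidean chord length and normalized arc length on $\partial\mathbb{D}$ are comparable for short arcs, this forces the arc-distance from $z^*$ to $\xi$ to be at most $C(c)(1-|z|)$, and hence $\xi$ lies in $\eta^{-1} I(z)$ once $\eta$ is small enough in terms of $c$. Choosing $\eta = \eta(f) > 0$ less than the minimum of $\varepsilon$, $c/2$, and this geometric threshold completes the proof. The only substantive ingredient is Theorem~\ref{theo1.3}; everything else is bookkeeping, and there is no real obstacle.
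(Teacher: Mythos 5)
Your proposal is correct and follows essentially the same route as the paper's proof: normalize $w$ (the paper takes $w=1$), choose $\eta\le\varepsilon$, invoke Theorem~\ref{theo1.3} to get $\operatorname{Re} S\ge cA$, and close with an elementary estimate on $|w-S|$ using also $|S|\le A$. The only cosmetic differences are that the paper extracts $|w-S|\le w-\eta A$ via $\sqrt{1-x}\le 1-x/2$ (giving $\eta<2c/3$) where you square both sides (giving $\eta\le c/2$), and the paper reads $\xi\in c^{-1}I(z)$ directly off the construction in the proof of Theorem~\ref{theo1.3} rather than converting the chord bound $|\xi-z|\le c^{-1}(1-|z|)$ into arc membership.
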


\begin{proof}
We can assume $w=1$. Let $0< \varepsilon = \varepsilon (f)<1$ and $0< c = c(f) <1$ be the constants appearing in Theorem~\ref{theo1.3}. Pick $0< \eta < \min \{\varepsilon , c\}$. Applying Theorem~\ref{theo1.3} one finds a point $\xi\in c^{-1}I(z)$ such that
\begin{equation}\label{eq18}
\operatorname{Re} \left(\sum^{N}_{n=M} a_{n}f^{n}(\xi)\right)\ge c \sum^{N}_{n=M}|a_{n}|.
\end{equation}
Write $A=\sum\limits_{n=M}^{N}|a_{n}|$. Since $A\le \eta < 1$, we have
$$
\left| 1-\sum^{N}_{n=M}a_{n} f^{n}(\xi)\right|^{2}\le (1-cA)^{2}+A^{2}-(cA)^{2}= 1-A (2c-A).
$$
See Figure 1. Using the elementary estimate $\sqrt{1-x}\le 1-x/2$, $0\le x<1$, we deduce that  $\bigl|1-\sum\limits_{n=M}^{N}a_{n}f^{n}(\xi)\bigr|\le 1-A(c-A/2)\le 1-\eta A$ if $0<\eta <2c/3$.

\begin{figure}[h]
\begin{center}
\includegraphics{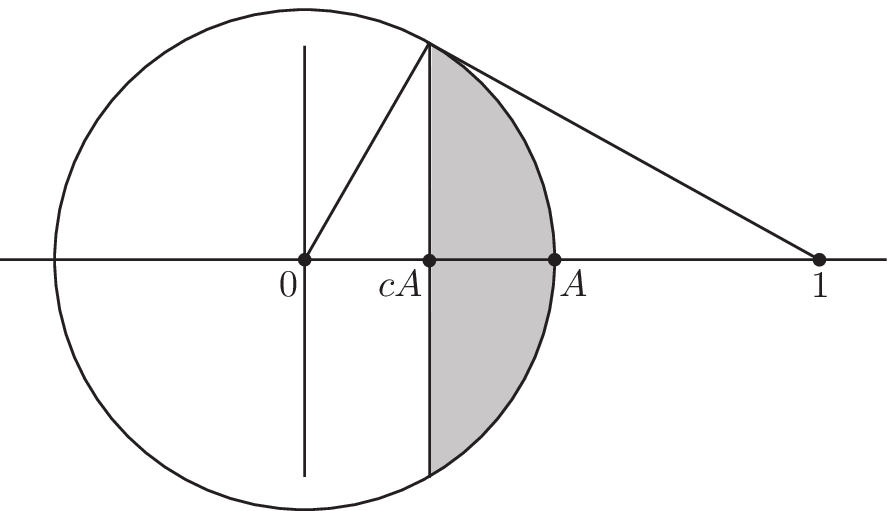}
\end{center}
\end{figure}

\hspace{3cm} Figure 1: $\sum a_{n} f^{n}(\xi)$ lies in the shadowed region.
\end{proof}

We are now ready to prove Theorem~\ref{theo1.1}. We will again combine some ideas of Weiss (\cite{W}) with the interplay between dynamical properties of $f$ as a selfmapping of $\mathbb{D}$ with those as a selfmapping of $\partial \mathbb{D}$.

\begin{proof}[Proof of Theorem~\ref{theo1.1}]
Fix $w\in\mathbb{C}$. We first divide the sum $\sum a_{n}f^{n}$ into blocks
$$
P_{j}=\sum_{n\in\mathcal{P}_{j}} a_{n}f^{n},\quad P_{j}^{*}=\sum_{n\in\mathcal{P}_{j}^{*}}a_{n}f^{n},
$$
where $\mathcal{P}_{j}=\{n\in\mathbb{Z}: M_{j}\le n\le N_{j}\}$, $M_{1}=1$ and $\mathcal{P}_{j}^{*}=\{n\in\mathbb{Z}:N_{j}<n<M_{j+1}\}$, $j\ge 1$. This splitting is similar to the decomposition used in the proof of Theorem~\ref{theo1.3} but this time the number of indices in $\mathcal{P}_{j}$ may grow as $j\to\infty$ while, as before, the number of indices in $\mathcal{P}_{j}^{*}$ is a large number independent of~$j$ to be chosen later. In other words, the blocks will be chosen in such a way that $M_{j+1}-N_{j}=T= T(f)$ is large but independent of~$j$, while $N_{j}-M_{j}$ may tend to infinity as $j\to\infty$. Write
$$
F_{k}=\sum^{k}_{j=1}P_{j}+\sum^{k-1}_{j=1}P_{j}^{*}=\sum^{N_{k}}_{n=1}a_{n}f^{n}.
$$
The blocks $P_{j}$ and $P_{j}^{*}$ will be constructed inductively, as will a sequence of positive numbers~$\delta_{k}$ tending to $0$, and a nested sequence of closed arcs~$I_{k}\subset \partial\mathbb{D}$ in such a way that there exists a constant $0< \gamma = \gamma (f) <1$ such that if $d_{k}=\min \{|F_{k}(\xi)-w|: \xi\in I_{k}\}$, the following estimates hold
\begin{align}
&d_{k+1}\le \gamma d_{k}+\delta_{k},\label{eq19}\\*[5pt]
&\sum_{n\in\mathcal{P}_{k}^{*}\cup \mathcal{P}_{k+1}}|a_{n}|\underset{k\to\infty}{\longrightarrow} 0,\label{eq20}\\*[5pt]
&m(f^{N_{k}}(I_{k}))= 1/2.\label{eq21}
\end{align}

Once this inductive construction is done, the result follows easily. Actually pick $\xi\in \bigcap\limits_{j}I_{j}$. Note that \eqref{eq19} gives $d_{k}\to 0$ as $k\to\infty$.~Then, identity~\eqref{eq21} and Corollary~\ref{coro2.3} give that $F_{k}(\xi)\to w$ as $k\to\infty$. Applying \eqref{eq20} we deduce that $\sum\limits_{n}a_{n}f^{n}(\xi)$ converges and its sum is~$w$. 

We now explain the construction of the blocks~$P_{j}$, $P_{j}^{*}$ and the arcs~$I_{j}$. For $k=1$ one can pick any partial sum~$P_{1}$ and any arc~$I_{1}\subset\partial\mathbb{D}$ with $m(f^{N_{1}}(I_{1}))= 1/2$. Assume by induction that the blocks $P_{1},P_{1}^{*},\dotsc,P_{k-1},P_{k-1}^{*},P_{k}$ and the arcs~$I_{k}\subset I_{k-1}\subset \dotsb\subset I_{1}$ have been chosen.~We will now define $P_{k}^{*}$, $P_{k+1}$ and the arc~$I_{k+1} \subset I_k$ and show that the estimates~\eqref{eq19}, \eqref{eq20} and \eqref{eq21} hold.~Let $\eta = \eta (f) >0$ be the constant appearing in Corollary~\ref{coro3.1}.~Fix a constant $0<c_{1}=c_{1}(\eta)<1$ such that there exists $z_{k}^{*} \in \mathbb{D}$ with $\rho(z_{k}^{*}, z(I_{k}))\le c_{1}$ such that $\eta^{-1} I(z_{k}^{*})\subset I_{k}$. Observe that \eqref{eq21} and part~(b) of Lemma~\ref{lem2.4} provide a constant~$0<\gamma_{0}=\gamma_{0}(f)<1$ such that $|f^{N_{k}}(z(I_{k}))|\le \gamma_{0}$. By Schwarz's Lemma, there exists a constant $0<\gamma_{1}=\gamma_{1}(\gamma_{0},\eta)<1$ such that $|f^{N_{k}}(z_{k}^{*})|\le \gamma_{1}$. Since $f^{n}\to 0$ uniformly on compacts of~$\mathbb{D}$, if the number of indices~$T= T(f)$ of~$P_{k}^{*}$ is chosen sufficiently large (but independent of~$k$), we deduce $|f^{M_{k+1}}(z_{k}^{*})|\le \eta$, where $M_{k+1}=N_k+T$. This defines $P_{k}^{*}$. This choice of $M_{k+1}$ allows to apply Corollary~\ref{coro3.1} to the point~$z_{k}^{*}$ and any partial sum starting at the index~$M_{k+1}$. To define the next block~$P_{k+1}$ we distinguish two cases.

\vspace*{7pt}
\noindent
(I) Assume $\max\limits_{j\ge N_{k}}|a_{j}|\le \eta d_{k}/4$. In this case, let $N_{k+1}$ be the minimal integer bigger than~$M_{k+1}$ such that
$$
\sum^{N_{k+1}}_{n=M_{k+1}}|a_{n}|\ge \eta d_{k}/2.
$$
This defines the block $P_{k+1}$. Since $|a_{N_{k+1}}| \leq \eta d_k /4$, the minimality of $N_{k+1}$ gives
\begin{equation}\label{eq22}
3\eta d_{k}/4\ge \sum^{N_{k+1}}_{n=M_{k+1}} |a_{n}|\ge \eta d_{k}/2.
\end{equation}
Let $\xi_{k}\in I_{k}$ such that $|w-F_{k}(\xi_{k})|=d_{k}$. Apply Corollary~\ref{coro3.1} to the point $z_k^* \in \mathbb{D}$, the value~$w-F_{k}(\xi_{k})$ and the block~$P_{k+1}$ to obtain a point~$\xi_{k+1} \in \eta^{-1} I(z_k^*) \subset I_{k}$ such that
\begin{equation*}
\begin{split}
|w-F_{k}(\xi_{k})-P_{k+1}(\xi_{k+1})|&\le d_{k}-\eta \sum_{n\in\mathcal{P}_{k+1}}|a_{n}|\\*[5pt]
&\le d_{k}\left(1-\frac{\eta^{2}}{2}\right).
\end{split}
\end{equation*}
Hence
$$
|w-F_{k}(\xi_{k+1})-P_{k+1}(\xi_{k+1})|\le d_{k}\left( 1-\frac{\eta^{2}}{2}\right)+\alpha_{k},
$$
where $\alpha_{k}=\max \{|F_{k}(\xi)-F_{k}(\xi')|\, : \xi,\xi'\in I_{k}\}$. We deduce 
\begin{equation}\label{eq23}
|w-F_{k+1}(\xi_{k+1})|\leq d_{k}\left( 1-\frac{\eta^{2}}{2}\right)+\alpha_{k}+\sum_{n\in\mathcal{P}_{k}^{*}}|a_{n}|.
\end{equation}
Since $f$ is expanding and $N_{k+1}>N_{k}$, the identity~\eqref{eq21} gives $m(f^{N_{k+1}}(I_{k}))> 1/2$.~Let $I_{k+1}$ be an arc with $\xi_{k+1}\in I_{k+1}\subset I_{k}$ and $m(f^{N_{k+1}}(I_{k+1}))= 1/2$.~We now check \eqref{eq19}, \eqref{eq20} and \eqref{eq21}.~Note that \eqref{eq21} holds by construction. Define $\delta_k=\alpha_k+\sum_{n\in{\cal P}^*_k}|a_n|$. The estimate \eqref{eq23} gives $d_{k+1} \leq d_k (1- \eta^2 /2) + \delta_k$, which is \eqref{eq19}  with $\gamma = 1- \eta^2 /2$. Corollary \ref{coro2.3} gives that $\alpha_k$ tends to zero and since ${\cal P}^*_k$ contains a fix number of indices, it follows that $\delta_k$ tends to zero. Finally note that \eqref{eq22} and the fact that $d_k$ tends to $0$ as $k$ tends to infinity, give \eqref{eq20}.

\vspace*{7pt}
\noindent
(II) Assume $\max\limits_{j\ge N_{k}}|a_{j}|>\eta d_{k}/4$. In this case, we choose $P_{k+1}$ having a single term, that is, $N_{k+1}=M_{k+1}$.~Let $\xi_{k}\in I_{k}$ such that $|w-F_{k}(\xi_{k})|=d_{k}$. As in case (I), let $I_{k+1}$ be an arc with $\xi_{k}\in I_{k+1}\subseteq I_{k}$ and $m(f^{N_{k+1}}(I_{k+1}))= 1/2$. Hence \eqref{eq21} follows by construction. Since in this case $\mathcal{P}_{k}^{*}\cup \mathcal{P}_{k+1}$ has $T+1$~elements, \eqref{eq20} follows from the assumption that $\{a_{n}\}$ tends to~$0$. Define $\delta_{k+1} =\left(4{\eta}^{-1}+T+1\right) \max_{j\ge N_{k}} |a_j|$. Note that $\delta_k$ tends to $0$ as $k$ tends to infinity. Then 
\begin{equation*}
\begin{split}
d_{k+1}\leq |F_{k+1}(\xi_{k})-w|&\le |F_{k}(\xi_{k})-w|+|P_{k+1}(\xi_{k})|+|P_{k}^{*}(\xi_{k})|\\*[5pt]
&\le d_{k}+(T+1)\max_{j\ge N_{k}}|a_{j}| \leq \delta_{k+1}.
\end{split}
\end{equation*}
This gives \eqref{eq19}.
\end{proof}

The proof of Theorem~\ref{theo1.2} has some similarities with the proof of Theorem~\ref{theo1.1} but requires some new ideas.

\begin{proof}[Proof of Theorem~\ref{theo1.2}]
Let $\eta=\eta(f)>0$ be the constant appearing in Corollary~\ref{coro3.1}. Fix any $\xi_{1}\in \partial\mathbb{D}$. Let $N_{1}$ be a positive integer to be fixed later and consider $L_{1}=\sum\limits^{N_{1}}_{n=1}a_{n}f^{n}$. We will show that for any $w\in\mathbb{C}$ satisfying
\begin{equation}\label{eq24}
|w-L_{1}(\xi_{1})|\le \frac{\eta}{10}\sum^{\infty}_{n=N_{1}+1}|a_{n}|,
\end{equation}
there exists $\xi\in\partial\mathbb{D}$ such that $\sum\limits^{\infty}_{n=1}a_{n}f^{n} (\xi)$ converges and its sum is $w$.

Take $M_1 = 1$. We first explain the choice of the positive integer $N_1$. Let $0< \gamma_1 = \gamma_1 (\eta) < 1$ be a  number to be fixed later. Since $f^n$ tends to $0$ uniformly on compacts of $\mathbb{D}$, there exists an integer $T=T(\eta) >0$ such that 
\begin{equation}\label{ite}
|f^T (z)| \leq \eta \quad \text{  if  } \quad  |z|< \gamma_1.
\end{equation}
Also note that the assumption \eqref{eq3} gives
\begin{equation*}
\lim_{j \to \infty} \frac{\sum_{n=j}^{j+T} |a_n|}{\sum_{n=j+T}^{\infty} |a_n|} =0. 
\end{equation*}
Hence there exists an integer $N_1 >0$ such that 
\begin{equation}\label{des1}
 \sum_{n=j}^{j+T} |a_n| \leq \frac{\eta^2}{100} \sum_{n=j+T}^{\infty} |a_n| , \quad j \geq N_1
 \end{equation}
and
\begin{equation}\label{des2}
  |a_j| \leq \frac{\eta^2}{300} \sum_{k>j } |a_k| , \quad  j \geq N_1.
 \end{equation}
Fix any $\xi_1 \in \partial\mathbb{D}$ and let $I_1$ be an arc containing $\xi_1$ such that $m(f^{N_1} (I_1)) = 1/2$. Fix $w \in \mathbb{C}$ satisfying \eqref{eq24}. Note that
\begin{equation*}
d_1 :=\inf \{|w-L_{1}(\xi)|:\xi\in I_{1}\} <\frac{\eta}{10}\sum^{\infty}_{n=N_{1}+1}|a_{n}|. 
\end{equation*}

By induction we will construct a sequence of nested arcs~$I_{k}\subset\partial\mathbb{D}$ and positive integers $M_{k}<N_{k}$ with $M_{1}=1$ and $N_{k}<M_{k+1}$, $k\ge 1$, such that the partial sums $F_1 = L_1$, $\displaystyle F_k=\sum_{j=1}^k L_j+\sum_{j=1}^{k-1} S_j$, $k>1$, where
$$
L_{j}=\sum^{N_{j}}_{n=M_{j}}a_{n}f^{n} ,\quad S_{j}=\sum_{n=N_{j}+1}^{M_{j+1}-1} a_{n}f^{n},\quad j\ge 1,
$$
satisfy
\begin{align}
&m(f^{N_{k}}(I_{k}))= 1/2, \quad k\ge 1,\label{eq25}\\
&d_{k}:=\inf \left\{ \biggl|w-F_k(\xi)\biggr|:\xi\in I_{k}\right\}\le \frac{\eta}{10} \sum^{\infty}_{n=N_{k}+1}|a_{n}|,\quad k\geq 1,\label{eq26}
% \\
% &d_{1}=\inf \{|w-L_{1}(\xi)|:\xi\in I_{1}\} <\frac{\eta}{10}\sum^{\infty}_{n=N_{1}+1}|a_{n}|.\label{eq27}
\end{align}
Note that this would end the proof. Since $m(f^{N_{k}}(I_{k}))= 1/2$, Corollary~\ref{coro2.3} gives
\begin{equation}\label{eq29}
\max \{|F_{k}(\xi)-F_{k}(\xi')|: \xi,\xi'\in I_{k}\}\underset{k\to\infty}{\longrightarrow} 0.
\end{equation}
Pick $\xi \in\bigcap\limits_{ k}I_{k}$. Note that \eqref{eq26} gives that $d_k$ tends to zero. Then \eqref{eq29} gives that $F_k(\xi)$ converges to~$w$. Since $\sum\limits_{n}|a_{n}|<\infty$, this would finish the proof.

The construction of the blocks~$L_{j}$, $S_{j}$ and the arcs~$I_{j}$ is done by induction and has some similarities to the construction used in the proof of Theorem~\ref{theo1.1}. Again the number of terms of each~$S_{j}$ will be a large number independent of~$j$, actually $M_{j+1}-N_{j}=T$ for any $j$, while the number of terms of the blocks~$L_{j}$ may be arbitrarily large, that is, $N_{j}-M_{j}$ may tend to~$\infty$ as $j\to\infty$. However the proof is more involved and several technical adjustments are needed. We have already constructed $F_1 = L_1$ and the arc $I_1$ satisfying \eqref{eq25} and \eqref{eq26} for $k=1$. Assume, by induction, that the blocks~$L_{1},\dotsc, L_{k}$, $S_{1},\dotsc, S_{k-1}$ and the arcs $I_{k}\subset I_{k-1}\subset\dotsb \subset I_{1}$ have been constructed verifying the induction assumptions. We will complete the inductive step constructing the blocks~$S_{k}$, $L_{k+1}$ and the arc~$I_{k+1}$.

Fix a constant $0< c_{1}=c_{1}(\eta)<1$ such that there exists $z_{k}^{*} \in \mathbb{D}$ with $\rho (z_{k}^{*},z(I_{k}))\le c_{1}$ such that $\eta^{-1}I(z_{k}^{*})\subset I_{k}$.~Observe that \eqref{eq25} and part~(b) of Lemma~\ref{lem2.4} provide a constant $0<\gamma_{0}=\gamma_{0}(f)<1$ such that $|f^{N_{k}}(z(I_{k}))|\le \gamma_{0}$.~By Schwarz's Lemma there exists a constant $0<\gamma_{1}=\gamma_{1}(\gamma_{0},\eta)<1$ such that $|f^{N_{k}}(z_{k}^{*})|\le\gamma_{1}$. Since $M_{k+1} = N_k + T$, estimate \eqref{ite} gives  $|f^{M_{k+1}}(z_{k}^{*})|\le \eta$. This defines $S_{k}$ and allows to apply Corollary~\ref{coro3.1} to any partial sum starting at the index~$M_{k+1}$. The construction of the block~$L_{k+1}$ and the arc~$I_{k+1}$ requires to consider two cases:

\vspace*{7pt}
\noindent
(I) Assume
\begin{equation}\label{eq30}
\sum^{\infty}_{n=N_{k}+1}|a_{n}| \le \frac{100 d_{k}}{\eta}.
\end{equation}
Note that the induction hypothesis~\eqref{eq26} gives
$$
\sum^{\infty}_{n=N_{k}+1}|a_{n}|\ge \frac{10d_{k}}{\eta}.
$$
Since $M_{k+1} = N_k + T$, estimate \eqref{des1} gives 
$$
\sum^{\infty}_{n=M_{k+1}}|a_{n}|\ge \frac{9d_{k}}{\eta}.
$$
Let $N_{k+1}$ be the smallest positive integer bigger than $M_{k+1}$ such that
$$
\sum^{N_{k+1}}_{n=M_{k+1}}|a_{n}|\ge \frac{d_{k}\eta}{2}.
$$
This defines the block~$L_{k+1}$. Note that
$$
\sum^{N_{k+1}}_{n=M_{k+1}}|a_{n}|\le \frac{d_{k}\eta}{2}+|a_{N_{k+1}}|.
$$
By estimate \eqref{des2},
$$
 |a_{N_{k+1}}| \leq \frac{\eta^2}{300} \sum\limits_{n>N_{k+1}}|a_{n}|.
$$
Hence estimate (\ref{eq30}) gives
\begin{equation}\label{eq31}
\frac{5d_{k} \eta}{6} \ge \sum^{N_{k+1}}_{n=M_{k+1}}|a_{n}| \ge \frac{d_{k} \eta}{2} .
\end{equation}
Let $\xi_{k}\in  I_{k}$ such that $d_{k}=|w-F_{k}(\xi_{k})|$. Apply Corollary~\ref{coro3.1} to the point~$z_{k}^{*}$, the value $w - F_k (\xi_k)$ and the block~$L_{k+1}$, to find $\xi_{k+1}\in \eta^{-1}I(z_{k}^{*})  \subset I_k $ such that
$$
|w-F_{k}(\xi_{k})-L_{k+1}(\xi_{k+1})|\le d_{k}-\eta \sum^{N_{k+1}}_{n=M_{k+1}}|a_{n}|.
$$
Hence
\begin{equation}\label{eq32}
|w-F_{k}(\xi_{k})-S_{k}(\xi_{k+1})-L_{k+1}(\xi_{k+1})|\le d_{k}-\eta \sum^{N_{k+1}}_{n=M_{k+1}}|a_{n}|+\sum^{M_{k+1}-1}_{n=N_{k}+1}|a_{n}|.
\end{equation}
Since $m(f^{N_{k}}(I_{k}))= 1/2$ and $f$ is expanding, one can find an arc~$I_{k+1}$ with $\xi_{k+1}\in I_{k+1}\subset I_{k}$ and $m(f^{N_{k+1}}(I_{k+1}))= 1/2$. We complete the inductive step showing
\begin{equation}\label{eq33}
d_{k+1}\le \frac{\eta}{10}\sum^{\infty}_{n=N_{k+1}+1}|a_{n}|.
\end{equation}
Let $K=\min \{|f'(\xi)|: \xi \in  \partial\mathbb{D}\}$. Since $f(0)=0$ and $f$ is not a rotation, we have $K>1$. Note that by Lemma~\ref{lem2.1} we have
$$
|F_{k}(\xi_{k})-F_{k}(\xi_{k+1})|\le \pi  \sum^{N_{k}}_{n=1}|a_{n}| K^{n-N_k}.
$$
Hence \eqref{eq32} gives
$$
d_{k+1}\le d_{k}-\eta\sum^{N_{k+1}}_{n=M_{k+1}}|a_{n}| +\sum^{M_{k+1}-1}_{n=N_{k}+1}|a_{n}| + \pi \sum^{N_{k}}_{n=1}|a_{n}|K^{n-N_{k}}.
$$
Applying the inductive assumption, the estimate \eqref{eq33} would follow from   
$$
\frac{\eta}{10}\sum^{\infty}_{n=N_{k}+1}|a_{n}| -\eta\sum^{N_{k+1}}_{n=M_{k+1}}|a_{n}| +\sum^{M_{k+1}-1}_{n=N_{k}+1}|a_{n}|+ \pi \sum^{N_{k}}_{n=1}|a_{n}| K^{n-N_{k}}\le\frac{\eta}{10}\sum^{\infty}_{n=N_{k+1}+1}|a_{n}|.
$$
Hence, we need to check 
\begin{equation}\label{eq34}
\frac{-9\eta}{10}\sum^{N_{k+1}}_{n=M_{k+1}}|a_{n}| +\left(1+\frac{\eta}{10}\right) \sum^{M_{k+1}-1}_{n=N_{k}+1}|a_{n}|+ \pi \sum^{N_{k}}_{n=1}|a_{n}|K^{n-N_{k}}\le 0.
\end{equation}
Now, (\ref{eq31}) says that the first sum is comparable to $d_k$. By Lemma~\ref{lem2.7} and the assumption (\ref{eq30}), last sum of (\ref{eq34}) can be absorved by the fist one.
On the other hand $M_{k+1}=N_{k}+T$ and  hence by \eqref{des1}, the second sum of \eqref{eq34} is also absorved by the first one. We see that \eqref{eq34} holds and the inductive step is completed in this case.

\vspace*{7pt}
\noindent
(II) Assume
\begin{equation}\label{eq35}
\sum^{\infty}_{n=N_{k}+1}|a_{n}| \ge \frac{100d_{k}}{\eta}.
\end{equation}
Let $\xi_k\in I_k$ such that $d_k=|w-F_k(\xi_k)|$ and pick $N_{k+1}=M_{k+1}$, that is, in this case $L_{k+1}$ has a single term. Since $m(f^{N_{k}}(I_{k}))= 1/2$ and $f$ is expanding, one can take an arc~$I_{k+1}$ with $\xi_{k}\in I_{k+1}\subset I_{k}$ and $m(f^{N_{k+1}}(I_{k+1}))= 1/2$. Note that
$$
d_{k+1}\le d_{k}+\sum^{M_{k+1}-1}_{n=N_{k}+1}|a_{n}|+|a_{N_{k+1}}|.
$$
Applying \eqref{eq35} we deduce
$$
d_{k+1}\le \frac{\eta}{100}\sum^{\infty}_{n=N_{k}+1}|a_{n}|+\sum^{M_{k+1}}_{n=N_{k}+1}|a_{n}|
$$
and to complete the inductive step, it is sufficient to check
$$
\frac{\eta}{100}\sum^{\infty}_{n=N_{k}+1}|a_{n}|+\sum^{M_{k+1}}_{n=N_{k}+1}|a_{n}|\le \frac{\eta}{10}\sum^{\infty}_{n=N_{k+1}+1}|a_{n}|.
$$
The choice $M_{k+1}-N_{k}=T+1$ and estimate \eqref{des1} finish the proof. 
\end{proof}
\section{Concluding remarks}\label{sec4}

Given $\xi \in \partial \mathbb{D}$ and $M>1$, let $\Gamma (\xi , M) = \{z \in \mathbb{D} : |z - \xi| \leq M(1-|z|) \}$ be the Stolz angle with vertex at $\xi$ and aperture depending on $M$. 
A function $F:\mathbb{D} \rightarrow \mathbb{C}$ has a non tangential limit at the point $\xi \in \partial \mathbb{D}$, which will be denoted by $\lim_{{z\to \xi}\atop{\nprec}} F(z)$, if for any $M>1$ the limit 
$$\lim_{ z \in \Gamma (\xi , M), z \to \xi } F(z)$$ 
exists and is finite. 

In the statements of Theorems~\ref{theo1.1} and \ref{theo1.2} one can replace the sum $\sum\limits_{n}a_{n}f^{n}(\xi)$ by the non-tangential limit
$$
\lim_{{z\to \xi}\atop{\nprec}}\sum_{n}a_{n}f^{n}(z).
$$
This follows from our next result which can be understood as a version of the classical Abel's Theorem in our context.

\begin{theorem}\label{theo4.1}
Let $f$ be a finite Blaschke product with $f(0)=0$ which is not a rotation. Let $\{a_{n}\}$ be a sequence of complex numbers and let $\xi\in\partial\mathbb{D}$ such that $\sum_n  a_{n}f^{n}(\xi)$ converges. Then the non-tangential limit 
$$
\lim_{{z\to\xi}\atop{\nprec}} \displaystyle{\sum_{n=1}^{\infty}} a_{n}f^{n}(z)
$$
exists and it is equal to $\sum\limits_{n=1}^\infty a_{n}f^{n}(\xi)$.
\end{theorem}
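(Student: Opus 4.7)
The plan is to adapt the classical proof of Abel's theorem by performing summation by parts, with the role of the factor $1-z$ played by the successive increments of the normalized iterate $u_n(z):=f^n(z)/f^n(\xi)$. This is well defined since $|f^n(\xi)|=1$, it satisfies $|u_n(z)|=|f^n(z)|$, and $u_N(z)\to 0$ as $N\to\infty$ for each fixed $z\in\mathbb{D}$ by Pommerenke's estimate recalled at the start of Section~\ref{sec2}. Writing $T_n=\sum_{k=1}^{n}a_kf^k(\xi)$, which by hypothesis converges to a finite limit $T$, Abel's summation formula gives
\[
\sum_{n=1}^{N}a_{n}f^{n}(z)=T_{N}u_{N}(z)+\sum_{n=1}^{N-1}T_{n}\bigl(u_{n}(z)-u_{n+1}(z)\bigr).
\]
The bound $|u_n(z)-u_{n+1}(z)|\le 2|f^n(z)|$ (using $|f^{n+1}(z)|\le|f^n(z)|$ by Schwarz) together with Pommerenke's decay give absolute convergence for $z\in\mathbb{D}$, so letting $N\to\infty$ produces $F(z)=\sum_{n=1}^{\infty}T_{n}(u_{n}(z)-u_{n+1}(z))$.

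Using the telescoping identity $\sum_{n\ge 1}(u_n(z)-u_{n+1}(z))=u_1(z)$, I would rewrite
\[
F(z)-T\,u_{1}(z)=\sum_{n=1}^{\infty}(T_{n}-T)\bigl(u_{n}(z)-u_{n+1}(z)\bigr).
\]
Since $u_1(z)=f(z)/f(\xi)\to 1$ as $z\to\xi$ by continuity of $f$ on $\overline{\mathbb{D}}$, it suffices to show the right-hand side tends to $0$ as $z\to\xi$ non-tangentially. The standard tail-splitting argument works: given $\varepsilon>0$, pick $N_0$ so that $|T_n-T|<\varepsilon$ for $n>N_0$; the finite portion $\sum_{n\le N_0}$ tends to $0$ because each $u_n(z)\to 1$, while the tail is dominated by $\varepsilon\sum_{n>N_0}|u_n(z)-u_{n+1}(z)|$.

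The main technical point is therefore the uniform bound
\[
\sum_{n=1}^{\infty}|u_{n}(z)-u_{n+1}(z)|\le C(M,f),\qquad z\in\Gamma(\xi,M),
\]
which in this setting plays the role of the classical Stolz-angle estimate $|1-z|/(1-|z|)\le M+1$. I would split at the critical index $N(z)$ of Lemma~\ref{lem4.0}. For $n>N(z)$, the crude inequality $|u_{n}(z)-u_{n+1}(z)|\le|f^{n}(z)|+|f^{n+1}(z)|$ combined with the geometric decay $|f^{n}(z)|\le C\,a^{n-N(z)}$ established inside the proof of Lemma~\ref{lem4.0} sums to a bounded geometric tail. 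For $n\le N(z)$, setting $a=f^n(z)$ and $b=f^n(\xi)$ and using the identity $af(b)-f(a)b=a(f(b)-f(a))+f(a)(a-b)$ yields
\[
|u_{n}(z)-u_{n+1}(z)|\le (1+K_{2})\,|f^{n}(z)-f^{n}(\xi)|,
\]
with $K_2=\max_{\overline{\mathbb{D}}}|f'|$, and \eqref{eq38} supplies $\sum_{n\le N(z)}|f^n(z)-f^n(\xi)|\le C$. The one subtlety, which I expect to be the main obstacle, is that Lemma~\ref{lem4.0} is stated for $\xi\in I(z)$ while non-tangential approach with aperture $M$ only places $\xi$ in a fixed multiple of $I(z)$; inspecting the proof of Lemma~\ref{lem4.0} shows the same estimate survives with a constant depending additionally on $M$, and this mild extension is the only genuinely technical ingredient needed to close the argument.
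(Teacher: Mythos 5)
Your proof is correct and follows essentially the same route as the paper's: Abel summation against the increments of $u_n(z)=f^n(z)/f^n(\xi)$, with the uniform bound $\sum_n|u_n(z)-u_{n+1}(z)|\le C(M,f)$ on $\Gamma(\xi,M)$ obtained by splitting at the index $N(z)$, invoking Lemma~\ref{lem4.0} for $n\le N(z)$ and Pommerenke's decay for $n>N(z)$. The only cosmetic difference is that the paper sums by parts against the tail sums $S_n(\xi)=\sum_{k\ge n}a_kf^k(\xi)$ rather than your partial sums $T_n$, and the $\xi\in I(z)$ subtlety you flag is equally present in the paper's own proof and, as you say, is absorbed into an $M$-dependent constant.
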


\begin{proof}
Let $K_1=\max\{|f'(\xi)|\, :\, \xi\in\partial\mathbb{D}\}$ and pick $M> K_1$. Fix $0< \delta < (10 K_1)^{-1}$. For $z\in \Gamma(\xi, M)$ let $N(z)$ be the smallest positive integer such that $m(f^{N(z)}(I(z)))\ge \delta$. Apply Lemma~\ref{lem4.0} to obtain a constant $C=C(\delta , f) >0$ such that
\begin{equation}\label{eq38}
\sum^{N(z)}_{k=1}|f^{k}(\xi)-f^{k}(z)|\le C, \quad \xi \in I(z). 
\end{equation}
Note that $N(z)\to\infty$ as $|z|\to 1$ and $m(f^{N(z)}(I(z)))\le \delta K_{1}$.~Part~(b) of Lemma~\ref{lem2.4} gives a constant $0<\gamma=\gamma(\delta,f)<1$ such that $|f^{N(z)}(z)|\le \gamma$. Pommerenke estimates of the Denjoy--Wolff Theorem (Lemma~2 of \cite{P}) provide constants $C_1=C_1(\gamma) >0$ and $0<a=a(f)<1$ such that
\begin{equation}\label{eq37}
|f^{n}(z)|\le C_1 a^{n-N(z)},\quad n\ge N(z).
\end{equation}

We are now ready to prove the result. Fix $\varepsilon>0$.~We want to show that there exists $N_{0}=N_{0}(\varepsilon)>0$ such that for any $z\in\Gamma(\xi , M)$ sufficiently close to~$\xi$, we have
\begin{equation}\label{eq39}
\left|\sum^{\infty}_{n=N}a_{n}(f^{n}(z)-f^{n}(\xi))\right|\le\varepsilon,\quad N\ge N_{0}.
\end{equation}
Summing by parts
$$
\sum^{\infty}_{n=N}a_{n}(f^{n}(z)-f^{n}(\xi))=\sum^{\infty}_{n=N+1}S_{n}(\xi) \left(\frac{f^{n}(z)}{f^{n}(\xi)}-\frac{f^{n-1}(z)}{f^{n-1}(\xi)}\right)+S_{N}(\xi)\left(\frac{f^{N}(z)}{f^{N}(\xi)}-1\right),
$$
where
$$
S_{n}(\xi)=\sum^{\infty}_{k=n}a_{k}f^{k}(\xi).
$$
Next, we will show that there exists a constant $C_2=C_2 (\delta, f)>0$ such that
\begin{equation}\label{eqL}
    \sum_{n=1}^\infty \left|\frac{f^n(z)}{f^n(\xi)}-\frac{f^{n-1}(z)}{f^{n-1}(\xi)}\right|\leq C_2.
\end{equation}
Note that estimate \eqref{eq37} gives a constant $C_3=C_3(\delta, f)>0$ such that,
$$\sum_{n=1}^\infty \left|\frac{f^n(z)}{f^{n}(\xi)}-\frac{f^{n-1}(z)}{f^{n-1}(\xi)}\right|\leq \sum_{n=1}^{N(z)} \left|f^n(z) f^{n-1}(\xi)- f^{n-1}(z)f^{n-1}(\xi) \right|+C_3.$$
Adding and substracting ${f^n(\xi)}{f^{n-1}(\xi)}$ and applying estimate \eqref{eq38}, we obtain \eqref{eqL}. By assumption, we can choose $N_0$ such that  $|S_{n}(\xi)|\le\varepsilon$ if $n\ge N_{0}$. This gives \eqref{eq39} and finishes the proof.
\end{proof}

Another turn of the screw of the methods in the proof of Theorem~\ref{theo1.1} leads to our next result which is formally, a generalization of Theorem~\ref{theo1.1}. An analogous result in the context of lacunary series can be found in \cite{KWW}. Given a sequence of complex numbers $\{b_n \}$ let $\{\sum b_n\}'$ denote the set of accumulation points of its partial sums, that is, 
$$
\{\sum b_n\}' = \bigcap_{n=1}^\infty \overline{\left\{\sum_{k=1}^m b_k\, :\, m\geq n\right\}}.
$$
Note that if $\{b_n \}$ tends to $0$, then $\{\sum b_n\}' $ is a closed connected set of the extended complex plane. 
\begin{theorem}\label{theoremA}
Let $f$ be a finite Blaschke product with $f(0)=0$ which is not a rotation. Let $\{a_{n}\}$ be a sequence of complex numbers tending to~$0$ such that $\sum |a_{n}|=\infty$. Then, for any closed connected set $K$ of the extended complex plane, there exists a point $\xi\in\partial\mathbb{D}$ such that
$$
K = \{\sum a_n f^n (\xi) \}' .
$$
\end{theorem}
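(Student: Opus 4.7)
The plan is to refine the inductive construction used in the proof of Theorem~\ref{theo1.1} so that, instead of converging to a single target~$w$, the partial sums $T_m(\xi)=\sum_{n=1}^{m}a_n f^n(\xi)$ are steered along a dense walk in~$K$.

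First I would build a sequence $\{v_k\}\subset K$ together with a tolerance sequence $\alpha_k\to 0$ such that $\sum_k \alpha_k=\infty$, $|v_{k+1}-v_k|\le \alpha_k$, and for every $w\in K$ the set $\{k:|v_k-w|<\alpha_k\}$ is infinite. When $K\subset\mathbb{C}$ is compact this is produced by a diagonal argument: one concatenates increasingly fine $\delta$-chains of points of~$K$ that visit a fixed countable dense subset, using that any compact connected metric space is chain-connected at every scale. The case $\infty\in K$ only requires mixing in points of arbitrarily large modulus, and if anything simplifies the coefficient matching below since it allows $|w_k|$ to be large.

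With the walk at hand I would run the inductive construction of Theorem~\ref{theo1.1} with the moving target~$v_k$. Namely, I would produce nested closed arcs $I_k\subset \partial\mathbb{D}$ and positive integers $M_k\le N_k<M_{k+1}=N_k+T$ for a large $T=T(f)$, satisfying $m(f^{N_k}(I_k))=1/2$ and
\begin{equation*}
\inf_{\xi\in I_k}\bigl|T_{N_k}(\xi)-v_k\bigr|\le c\,\alpha_k
\end{equation*}
for some $c=c(f)>0$. The transition from $I_k$ to $I_{k+1}$ reproduces case~(I) of the proof of Theorem~\ref{theo1.1}: pick $z_k^*\in\mathbb{D}$ with $\rho(z_k^*,z(I_k))\le c_1$ and $\eta^{-1}I(z_k^*)\subset I_k$, use part~(b) of Lemma~\ref{lem2.4} together with Schwarz's lemma to get $|f^{N_k}(z_k^*)|\le \gamma_1<1$, use the short block of length~$T$ and the Denjoy--Wolff estimate to obtain $|f^{M_{k+1}}(z_k^*)|\le \eta$, and then apply Corollary~\ref{coro3.1} to the value $w_k:=v_{k+1}-T_{N_k}(\xi_k)$ and the long block $L_{k+1}=\sum_{n=M_{k+1}}^{N_{k+1}}a_n f^n$, choosing $N_{k+1}$ minimal such that $\sum_{n\in L_{k+1}}|a_n|\ge \eta|w_k|/2$. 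To check the two inclusions at a point $\xi\in\bigcap_k I_k$, I would combine Corollary~\ref{coro2.3} and $m(f^{N_k}(I_k))=1/2$ to conclude that the oscillation of $T_{N_k}$ on $I_k$ tends to~$0$, whence $|T_{N_k}(\xi)-v_k|\to 0$; density of the walk then gives $K\subseteq\{\sum a_n f^n(\xi)\}'$. For the reverse inclusion I would observe that any partial sum $T_m(\xi)$ with $N_k\le m<N_{k+1}$ differs from $T_{N_k}(\xi)$ by at most the $L^1$-mass of $S_k\cup L_{k+1}$, which by construction is $O(\alpha_k)+o(1)$; hence $\mathrm{dist}(T_m(\xi),K)\to 0$ and every accumulation point of $\{T_m(\xi)\}$ lies in~$K$.

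The main obstacle, already present in the proof of Theorem~\ref{theo1.1}, is matching the $L^1$-mass of the long blocks to the prescribed tolerances~$\alpha_k$. Since the coefficients $\{a_n\}$ are given and cannot be reweighted, $\sum_{n\in L_{k+1}}|a_n|$ will generally overshoot $\eta|w_k|/2$ by as much as $\max_{n\ge N_k}|a_n|$, and one must split into two subcases as in the proof of Theorem~\ref{theo1.1} depending on whether this maximum is small or not compared to~$\alpha_k$. This in turn forces the walk $\{v_k\}$ to be chosen with $\alpha_k$ decaying slowly enough to dominate the tail of $\{a_n\}$, which is always feasible because $a_n\to 0$ but constitutes the only genuinely new technicality beyond the proof of Theorem~\ref{theo1.1}.
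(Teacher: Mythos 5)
Your proposal follows essentially the same route as the paper: approximate $K$ by a walk $\{w_n\}$ with $|w_{n+1}-w_n|\to 0$ and accumulation set equal to $K$, then rerun the inductive arc construction of Theorem~\ref{theo1.1} with this moving target (using Corollary~\ref{coro3.1} for the progress step, Lemma~\ref{lem2.4}(b) to re-establish the invariant $m(f^{N_k}(I_k))=1/2$, and Corollary~\ref{coro2.3} for oscillation control), taking $\xi$ in the intersection of the nested arcs. The extra bookkeeping you spell out — the tolerance sequence $\alpha_k$, the case split paralleling cases~(I)/(II) of Theorem~\ref{theo1.1}, and the $L^1$-mass bound showing intermediate partial sums $S_m(\xi)$ for $N_k\le m\le N_{k+1}$ stay within $O(\alpha_k)+o(1)$ of $K$ — is precisely the implicit content of the paper's remark that $S_m(\xi)$ should stay near the segment $[w_k,w_{k+1}]$.
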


\begin{proof}[Proof of Theorem~\ref{theoremA}]
Since the proof only requires an iteration of the ideas of the proof of Theorem~\ref{theo1.1}, we will only sketch it. Given the set $K$, let us consider a sequence $\{w_n\}$ of complex numbers with $|w_{n+1} - w_n| \to 0$ as $n \to \infty$, such that
$$K=\bigcap_{n=1}^\infty\overline{\{w_m\, :\, m\geq n\}}.$$ 
We use the notation  
$
S_N (\xi) = \sum_{n=1}^N a_n f^n (\xi)$. Using the methods in the proof of Theorem \ref{theo1.1} one can construct a closed arc $I_1\subset\partial\mathbb{D}$ and an integer $N_1$ so that for any $\xi\in I_1$, the partial sum $S_{N_1} (\xi) $ is close to $w_1$ and $S_m (\xi)$ is not far from the segment $[0,w_1]$ for any $0 < m\leq N_1$. Once one has constructed this arc, using again the same argument, one constructs a closed subarc $I_2\subset I_1$ and an integer $N_2>N_1$ such that for any $\xi\in I_2$,  the partial sum $S_{N_2} (\xi)$ is close to $w_2$ and $S_{m} (\xi)$ is not far from the segment $[w_1,w_2]$ for any $N_1\leq m\leq N_2$. Iterating this process and taking $\xi \in \bigcap I_j$ one obtains the desired result.
\end{proof}

%$$
%\bigcap_{n=1}^\infty \overline{\left\{\sum_{k=1}^m a_kf^k(\xi)\, :\, m\geq n\right\}}=K.$$
%\end{theorem}

Theorem~\ref{theoremA} has the corresponding counterpart in the context  of radial behaviour of the function $\displaystyle \sum a_nf^n$. Given an analytic function $g:\mathbb{D}\to\mathbb{C}$, the radial cluster set $\text{Cl}_r(g,\xi)$ of the function $g$ at the point $\xi\in\partial\mathbb{D}$ is defined to be
$$
\text{Cl}_r(g,\xi)=\bigcap_{r<1} \overline{\left\{g(s\xi)\, :\, s\geq r\right\}}.
$$
We will need the following easy consequence of Lemma~\ref{lem4.0}. 

\begin{lemma}\label{csr}
Let $f$ be a finite Blaschke product with $f(0)=0$ which is not a rotation. Fix $0< \delta < ( 10 \max \{|f'(\xi)| : \xi \in \partial \mathbb{D}\})^{-1}$. Then there exits a constant $C=C(\delta , f) >0$ such that the following statements hold. 

(a) Given $z \in \mathbb{D}$ let $N(z)$ be the smallest positive integer such that $m(f^{N(z)}(I(z)))\ge \delta$. Then 
\begin{equation*}
    \left| \sum_{n=1}^\infty a_n f^n (z) -  \sum_{n=1}^{N(z)} a_n f^n (\xi) \right| \leq C \sup_n |a_n|,\quad \xi \in I(z).
\end{equation*}

(b) Fix $\xi \in \partial \mathbb{D}$. Given a positive integer $N$, consider $r_N = \sup \{0<r<1\ : m(f^N (I(r \xi))) \geq \delta \}$. Then 

\begin{equation*}
    \left| \sum_{n=1}^N a_n f^n (\xi) -  \sum_{n=1}^{\infty} a_n f^n (r_N \xi) \right| \leq C \sup_n |a_n| .
\end{equation*}
\end{lemma}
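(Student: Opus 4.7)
The plan is to establish (a) by splitting the series at the index $N(z)$, and then derive (b) from (a) after identifying $N(r_N \xi) = N$.

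For part (a), I would write
\begin{equation*}
\sum_{n=1}^\infty a_n f^n(z) - \sum_{n=1}^{N(z)} a_n f^n(\xi) = \sum_{n=1}^{N(z)} a_n \bigl(f^n(z) - f^n(\xi)\bigr) + \sum_{n=N(z)+1}^\infty a_n f^n(z).
\end{equation*}
The finite sum is controlled directly by Lemma~\ref{lem4.0}, which provides a constant $C_0 = C_0(\delta, f) > 0$ with $\sum_{n=1}^{N(z)} |f^n(z) - f^n(\xi)| \leq C_0$ for every $\xi \in I(z)$; hence this contribution is bounded by $C_0 \sup_n |a_n|$. For the tail, the estimate \eqref{eq37} appearing inside the proof of Lemma~\ref{lem4.0} provides constants $C_1 = C_1(\delta,f) > 0$ and $0 < a = a(f) < 1$ such that $|f^n(z)| \leq C_1 a^{n-N(z)}$ for $n \geq N(z)$. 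Summing the geometric series bounds the tail by $C_1 (1-a)^{-1} \sup_n |a_n|$, and (a) follows after combining both pieces into a single constant $C = C(\delta, f)$.

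For part (b), the strategy is to apply (a) at the interior point $z = r_N \xi \in \mathbb{D}$. Since $z/|z| = \xi$, the arc $I(r_N \xi)$ is the boundary arc centered at $\xi$ of length $1-r_N$, and in particular $\xi \in I(z)$. The key verification is that $N(r_N \xi) = N$. By the monotone dependence of $m(f^N(I(r\xi)))$ on $r$ (as $r$ increases the arcs $I(r\xi)$ shrink, and their $f^N$-images form a nested decreasing family once of measure strictly less than $1$) together with continuity of $f^N$ on $\overline{\mathbb{D}}$, the defining supremum forces $m(f^N(I(r_N \xi))) = \delta$. Lemma~\ref{lem2.1} applied with $I = I(r_N \xi)$ then yields $m(f^{N-1}(I)) \leq \delta K^{-1} < \delta$, where $K = \min\{|f'(\eta)| : \eta \in \partial\mathbb{D}\} > 1$; this rules out $N(r_N \xi) < N$, so $N(r_N\xi) = N$. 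Applying (a) with this $z$ and boundary point $\xi$ gives exactly the inequality of (b).

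The main obstacle is the justification that $m(f^N(I(r_N \xi))) = \delta$, which rests on the continuity and monotonicity of $r \mapsto m(f^N(I(r\xi)))$ on the range where the image arc is proper. This should be routine: $I(r\xi)$ shrinks monotonically to $\{\xi\}$ as $r \to 1^-$, and compactness plus continuity of $f^N$ yield Hausdorff convergence of the image arcs, hence convergence of their Lebesgue measures. Once this identification is in hand, both parts reduce to routine bookkeeping with the two ingredients already packaged in the proof of Lemma~\ref{lem4.0}, namely the uniform control \eqref{eq38} on the early iterates and the geometric tail bound \eqref{eq37} after time $N(z)$.
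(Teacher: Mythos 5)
Your proof of part (a) is exactly the paper's: split at $N(z)$, invoke Lemma~\ref{lem4.0} for $\sum_{n\le N(z)}|a_n||f^n(z)-f^n(\xi)|\le C\sup|a_n|$, and use Pommerenke's exponential decay \eqref{eq37} to sum the tail geometrically. For part (b) the paper only says ``Part (b) follows similarly''; you instead deduce (b) from (a) by verifying $N(r_N\xi)=N$, which is a clean way to make that ``similarly'' precise. Your verification is essentially right: continuity and monotonicity of $r\mapsto m(f^N(I(r\xi)))$ give $m(f^N(I(r_N\xi)))=\delta$ at the supremum, and then the estimate $m(f^k(I))\le\delta K^{k-N}$ established inside the proof of Lemma~\ref{lem2.1} shows $m(f^k(I(r_N\xi)))<\delta$ for every $1\le k<N$ (not just $k=N-1$, which is what you need to rule out $N(r_N\xi)<N$ in full; you only mention the $k=N-1$ case, but the same bound disposes of all smaller $k$ since $K>1$). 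Note that it is the display $m(f^k(I))\le\delta K^{k-N}$ from Lemma~\ref{lem2.1}'s proof that you want, rather than the stated chord-length bound \eqref{eq6}, since you need a measure inequality. With these two bookkeeping points tightened your argument is complete and correct.
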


\begin{proof}
Apply Lemma~\ref{lem4.0} to obtain a constant $C=C(\delta , f) >0$ such that
\begin{equation*}
\sum^{N(z)}_{n=1} |a_n| |f^{n}(\xi)-f^{n} (z)|\le C \sup_n |a_n| , \quad \xi \in I(z). 
\end{equation*}
The exponential decay \eqref{eq37} given by Pommerenke's result, provides a constant $C_1=C_1 (\delta) >0$ such that 
\begin{equation*}
   \sum_{n= N(z)}^{\infty} | f^n (z) | \leq C_1 .
\end{equation*}
This finishes the proof of (a). Part (b) follows similarly. 
\end{proof}

Let $\{a_n\}$ be a sequence of complex numbers tending to $0$ and $F = \sum a_n f^n$. Note that in the previous Lemma we have that $N(z) \to \infty$ as $|z| \to 1$ and $r_N \to 1$ as $N \to \infty$. We deduce that $\text{Cl}_r(F,\xi) = \{\sum a_n f^n (\xi)\}'$ for any $\xi \in \partial \mathbb{D}$. Hence next result follows from Theorem~\ref{theoremA}. 

\begin{theorem}\label{theoremB}
Let $f$ and $\{a_n\}$ be as in  Theorem \ref{theoremA} and let $F$ be the analytic function defined by $\displaystyle F(z)=\sum_{n=1}^\infty a_n f^n(z)$, $ z \in  \mathbb{D} $. Then, for any closed connected set $K$ of the extended complex plane there exists a point $\xi\in\partial\mathbb{D}$ such that
$Cl_r(F,\xi)=K.$
\end{theorem}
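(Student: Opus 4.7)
The plan is to reduce Theorem~\ref{theoremB} to Theorem~\ref{theoremA} by establishing the identity
$$\text{Cl}_r(F,\xi) = \left\{\sum a_n f^n(\xi)\right\}'$$
for every $\xi \in \partial\mathbb{D}$, which is precisely the deduction hinted at in the paragraph preceding the statement. Once this identity is available, Theorem~\ref{theoremA} furnishes a point $\xi$ with $\{\sum a_n f^n(\xi)\}' = K$, and Theorem~\ref{theoremB} follows immediately.

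For the inclusion $\text{Cl}_r(F,\xi) \subseteq \{\sum a_n f^n(\xi)\}'$, I would fix $w$ in the radial cluster set and $s_k \to 1^-$ with $F(s_k\xi) \to w$, then set $N_k = N(s_k\xi)$ in the notation of Lemma~\ref{csr}; since $m(f^N(I(z))) \to 0$ as $|z|\to 1$ with $N$ fixed, one has $N_k \to \infty$. The target is $S_{N_k}(\xi) \to w$. The obstruction is that Lemma~\ref{csr}(a) only bounds the relevant difference by $C\sup_n |a_n|$, which need not be small. I would overcome this by a head-tail split: given $\varepsilon > 0$, use $a_n \to 0$ to pick $n_0$ with $C \sup_{n \geq n_0} |a_n| < \varepsilon/2$, and apply Lemma~\ref{csr}(a) to the truncated sequence obtained by zeroing out the first $n_0-1$ coefficients and to $z = s_k\xi$ (noting $\xi \in I(s_k\xi)$), obtaining
$$\left|\sum_{n \geq n_0} a_n f^n(s_k\xi) - \sum_{n=n_0}^{N_k} a_n f^n(\xi)\right| \leq \varepsilon/2.$$
The finite head $\sum_{n < n_0} a_n(f^n(s_k\xi) - f^n(\xi))$ vanishes as $s_k \to 1$ by continuity of each $f^n$ on $\partial\mathbb{D}$, so $|F(s_k\xi) - S_{N_k}(\xi)| \leq \varepsilon$ for $k$ large, and hence $S_{N_k}(\xi) \to w$.

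For the reverse inclusion $\{\sum a_n f^n(\xi)\}' \subseteq \text{Cl}_r(F,\xi)$, given $w$ and $N_k \to \infty$ with $S_{N_k}(\xi) \to w$, I would set $s_k = r_{N_k}$ as defined in Lemma~\ref{csr}(b); a short argument using that $f$ is expanding on $\partial\mathbb{D}$ (so that, for each fixed $r<1$, $m(f^N(I(r\xi))) \geq \delta$ for $N$ large) shows $r_N \to 1^-$ as $N \to \infty$. A parallel head-tail splitting, invoking Lemma~\ref{csr}(b) on the tail, yields $|F(s_k\xi) - S_{N_k}(\xi)| \to 0$, so $F(s_k\xi) \to w$ with $s_k \to 1^-$, and $w \in \text{Cl}_r(F,\xi)$. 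The two inclusions together give the identity and hence the theorem. The main subtlety is thus just the head-tail truncation used to convert the static bound in Lemma~\ref{csr} into a vanishing error term; once this is in place, the rest is bookkeeping, and Theorem~\ref{theoremA} delivers the conclusion with no further work.
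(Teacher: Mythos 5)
Your proposal is correct and takes essentially the same route as the paper: reduce Theorem~\ref{theoremB} to Theorem~\ref{theoremA} via the identity $\text{Cl}_r(F,\xi)=\{\sum a_nf^n(\xi)\}'$, which the paper deduces from Lemma~\ref{csr} together with $N(z)\to\infty$ as $|z|\to 1$ and $r_N\to 1$ as $N\to\infty$. You have in fact supplied the step the paper leaves implicit, namely the head--tail truncation needed to turn the static bound $C\sup_n|a_n|$ in Lemma~\ref{csr} into a vanishing error, and your handling of it (zeroing out the first $n_0-1$ coefficients and using continuity of the finite head) is exactly right.
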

\vspace{0.3cm}
\noindent We finally mention some related open problems we have not explored.
\begin{enumerate}
\item Let the function $f$ and the coefficients~$\{a_{n}\}$ be as in the statement of Theorem~\ref{theo1.1}. Fix $w\in\mathbb{C}$.~It is natural to study the size of the set~$E(w)$ of points~$\xi\in\partial\mathbb{D}$ such that $\sum a_{n}f^{n}(\xi)=w$. Note that if we assume $\sum |a_{n}|^{2}=\infty$, then $\sum a_{n}f^{n}$ is a function in the Bloch space that has radial limit at almost no point of the unit circle (see~\cite{N}). In this situation Rohde proved that the set 
$$
\left\{\xi\in\partial\mathbb{D}:\lim_{r\to 1}\sum_{n=1}^{\infty} a_{n}f^{n}(r\xi)=w\right\}
$$
has Hausdorff dimension~$1$. See \cite{Ro}. It is also natural to ask for the size of the set of points $\xi \in \partial \mathbb{D}$ where the conclusion of Theorem~\ref{theoremA} holds.

\item Let $F(z)=\sum a_{n}z^{k_{n}}$ be a lacunary power series with radius of convergence equals to~$1$, such that $\sum |a_{n}|=\infty$. Murai proved that for any $w\in\mathbb{C}$ there are infinitely many $z\in\mathbb{D}$ such that $F(z)= w$ (\cite{Mu3}). It is natural to seek for analogus results in our context. More concretely, let $f$ and $\{a_{n}\}$ be as in the statement of Theorem~\ref{theo1.1}.~Is it true that for any $w\in\mathbb{C}$ there exists $z\in\mathbb{D}$ such that $\sum a_{n}f^{n}(z)=w$?

\item The High Indices Theorem of Hardy and Littlewood provides a converse of Abel's Theorem in the setting of lacunary series. See \cite{K}. Hence it is natural to ask for the converse of Theorem~\ref{theo4.1}.
\end{enumerate}


\begin{thebibliography}{KWW}


\bibitem[Ba]{Ba}
\textsc{Bara\'nski, K.}
On some lacunary power series.
\emph{Michigan Math.\ J.} \textbf{54} (2006), no.~1, 65--79.

\bibitem[Be]{Be}
\textsc{Belov, A.\ S.}
On the Salem and Zygmund problem with respect to the smoothness of an analytic function that generates a Peano curve. (Russian)
\emph{Mat.\ Sb.} \textbf{181} (1990), no.~8, 1048--1060; translation in \emph{Math.\ USSR-Sb.} \textbf{70} (1991), no.~2, 485--497.

\bibitem[KWW]{KWW}
\textsc{Kahane, J.-P.; Weiss, M.; Weiss, G.}
On lacunary power series.
\emph{Ark.\ Mat.} \textbf{5} (1963), 1--26.

\bibitem[K]{K}
\textsc{Korevaar, J.}
\emph{Tauberian theory. A century of developments}.
Grundlehren der Mathematischen Wissenschaften [Fundamental Principles of Mathematical Sciences]~\textbf{329}.
Springer-Verlag, Berlin, 2004.

\bibitem[Mu1]{Mu1}
\textsc{Murai, T.}
The value-distribution of lacunary series and a conjecture of Paley.
\emph{Ann.\ Inst.\ Fourier (Grenoble)} \textbf{31} (1981), no.~1, vii, 135--156.

\bibitem[Mu2]{Mu2}
\textsc{Murai, T.}
On lacunary series.
\emph{Nagoya Math.\ J.} \textbf{85} (1982), 87--154.

\bibitem[Mu3]{Mu3}
\textsc{Murai, T.}
The boundary behaviour of Hadamard lacunary series.
\emph{Nagoya Math.\ J.} \textbf{89} (1983), 65--76.

\bibitem[N]{N}
\textsc{Nicolau, A.}
Convergence of linear combinations of iterates of an inner function. Preprint, 2021.

\bibitem[NS]{NS}
\textsc{Nicolau, A.; Soler i Gibert, O.}
A central limit theorem for inner functions. Preprint, 2020.

\bibitem[P]{P}
\textsc{Pommerenke, Ch.}
On ergodic properties of inner functions.
\emph{Math.\ Ann.} \textbf{256} (1981), no.~1, 43--50.

\bibitem[Ro]{Ro}
\textsc{Rohde, S.}
The boundary behavior of Bloch functions.
\emph{J. London Math. Soc.} \textbf{48} (1993), 488--499.

\bibitem[SZ]{SZ}
\textsc{Salem, R.; Zygmund, A.}
Lacunary power series and Peano curves.
\emph{Duke Math.\ J.} \textbf{12} (1945), 569--578.

\bibitem[W]{W}
\textsc{Weiss, M.}
Concerning a theorem of Paley on lacunary power series.
\emph{Acta Math.} \textbf{102} (1959), 225--238.

\bibitem[Y]{Y}
\textsc{Younsi, M.}
Peano curves in complex analysis.
\emph{Amer.\ Math.\ Monthly} \textbf{126} (2019), no.~7, 635--640.


\bibitem[Z]{Z}
\textsc{Zygmund, A.}
\emph{Trigonometric series}. Vol.~I, II. Second edition, reprinted with corrections and some additions.
Cambridge University Press, London-New York, 1968.

\end{thebibliography}
\end{document}